\def\@swap#1#2{\let\@tempa#1\let#1#2\let#2\@tempa}
\newcommand{\RR}{\mathbb{R}}
\newcommand{\NN}{\mathbb{N}}
\newcommand{\Pro}{\mathbb{P}}
\newcommand{\FF}{\mathscr{F}}
\newcommand{\GG}{\mathscr{G}}
\newcommand{\Ind}{\mathbbm{1}}
\newcommand{\leftsub}[2]{{\protect\vphantom{#2}}_{#1}{#2}} 
\newcommand{\abs}[1]{\left\lvert #1 \right\rvert}
\newcommand{\stproca}[1]{\left(#1\right)_{t \ge 0}}
\newcommand{\stproc}[1]{\stproca{#1_t}}
\newcommand{\FFt}{\stproc{\FF}}
\DeclareMathOperator{\rRe}{Re}
\renewcommand{\Re}{\rRe}
\newcommand{\iu}{\mathrm{i}} 
\newcommand{\dint}{\displaystyle\int}
\newenvironment{eqnarr}{\begin{IEEEeqnarray}{rCl}}{\end{IEEEeqnarray}\ignorespacesafterend}
\newenvironment{eqnarr*}{\begin{IEEEeqnarray*}{rCl}}{\end{IEEEeqnarray*}\ignorespacesafterend}
\newcommand{\eqnarrLHS}[1]{\IEEEeqnarraymulticol{3}{l}{#1} \\ \quad}
\newcommand{\eqnarrLHSnn}[1]{\IEEEeqnarraymulticol{3}{l}{#1} \nonumber \\ \quad}
\newcommand{\xiLS}{\xi^{\mathrm{L}}}
\newcommand{\xiCPP}{\xi^{\mathrm{C}}}
\newcommand{\CE}{\Psi}
\newcommand{\CELS}{\CE^{\mathrm{L}}}
\newcommand{\CECPP}{\CE^{\mathrm{C}}}
\newcommand{\LD}{\pi}
\newcommand{\LDCPP}{\pi^{\mathrm{C}}}
\newcommand{\LDLS}{\pi^{\mathrm{L}}}
\newcommand{\LSabs}{\xi^*}
\newcommand{\stP}{\mathrm{P}}
\newcommand{\stPhat}{\mathrm{\hat P}}
\newcommand{\stE}{\mathrm{E}}
\newcommand{\stEhat}{\mathrm{\hat E}}
\newcommand{\stPcsx}[3]{\stP_{#1} \left[ #2 \middle\vert #3 \right]}
\newcommand{\stPup}{\stP^\uparrow}
\newcommand{\stPhatup}{\stPhat^\uparrow}
\newcommand{\Xup}{X^\uparrow}
\newcommand{\Xhatup}{\hat X^\uparrow}
\newcommand{\stEup}{\stE^\uparrow}
\newcommand{\LevP}{\mathbb{P}}
\newcommand{\LevPhat}{\mathbb{\hat P}}
\newcommand{\LevE}{\mathbb{E}} 
\newcommand{\LevEhat}{\mathbb{\hat E}}
\newcommand{\LevPhatup}{\LevPhat^\uparrow}
\newcommand{\xiup}{\xi^\uparrow}
\newcommand{\xihatup}{\hat \xi^\uparrow}
\newcommand{\LevEhatup}{\LevEhat^\uparrow}
\newcommand{\rhohat}{\hat{\rho}}
\newcommand{\jump}{\Delta}
\newcommand{\Ttrans}{\mathcal{T}}
\newcommand{\EsscherT}{\mathcal{E}}
\newcommand{\Ghgsymb}{{\leftsub{2}{\mathcal{F}}}_1}
\newcommand{\Ghg}[4]{\Ghgsymb(#1,#2;#3;#4)}
\newcommand{\Indic}[1]{\Ind_{(#1)}}
\newcommand{\Ych}{\check{Y}}
\newcommand{\GGt}{\stproca{\GG_t}}
\newcommand{\dd}{\mathrm{d}}
\newcommand{\fromto}[2]{#1\nobreakdash--#2}
\newcommand{\LSdrift}{\mathtt{d}}
\newcommand{\pref}[1]{(\ref{#1})}
\newcommand{\taull}{{\tau \! -}}
\newcommand{\stparamset}{\mathcal{A}}
\newcommand{\upto}{\uparrow}
\DeclareMathOperator{\sgn}{sgn}
\theoremstyle{plain}
  \newtheorem{dummy}{***}[section]
  \newtheorem{thm}[dummy]{Theorem}
  \newtheorem{prop}[dummy]{Proposition}
  \newtheorem{cor}[dummy]{Corollary}
  \newtheorem{lem}[dummy]{Lemma}
\theoremstyle{definition}
  \newtheorem*{proof1}{Proof of Theorem \ref{interval hitting}, $\alpha \in (0,1]$}
  \newtheorem*{proof2}{Proof of Theorem \ref{interval hitting}, $\alpha \in (1,2)$}
  \newtheorem*{proof4}{Proof of Theorem \ref{HP 0 before up}}
  \newtheorem*{proof5}{Proof of Theorem \ref{potential int}}
  \newtheorem*{proofc1}{Proof of Corollary \ref{interval hitting prob}}
  \newtheorem*{proofp1}{Proof of Proposition \ref{Port limit}}
\theoremstyle{remark}
  \newtheorem{rem}[dummy]{Remark}
\newcommand{\define}{\emph} 
\newcommand{\for}{\qquad} 
\title{Hitting distributions of $\alpha$-stable processes via path censoring and self-similarity}
\author{{\large A. E. Kyprianou\footnote{{\sc University of Bath, UK.} E-mail: a.kyprianou@bath.ac.uk, aw295@bath.ac.uk}
\ \  J.C. Pardo\footnote{{\sc  CIMAT, Mexico.} E-mail: jcpardo@cimat.mx}
\ \ and
\  A. Watson$^*$
}
}
\begin{document}

\maketitle
\vspace{-1cm}

\begin{abstract}
\noindent
We consider two first passage problems for stable processes, not
necessarily symmetric, in one dimension.
We make use of a novel method of path censoring in order to
deduce explicit formulas for hitting probabilities, hitting distributions,
and a killed potential measure. To do this, we describe
in full detail the Wiener-Hopf factorisation
of a new Lamperti-stable-type L\'evy process obtained
via the Lamperti transform,
in the style of recent work in this area.

\bigskip \noindent
{\sc Key words and phrases}: L\'evy processes,
stable processes,
hitting distributions,
hitting probabilities,
killed potential,
stable processes conditioned to stay positive,
positive self-similar Markov processes,
Lamperti transform,
Lamperti-stable processes,
hypergeometric L\'evy processes.

\bigskip \noindent
MSC 2010 subject classifications: 60G52, 60G18, 60G51
\end{abstract}

\vspace{0.5cm}

\section{Introduction}

A L\'evy process is a stochastic
process issued from the origin with stationary and
independent increments and  c\`adl\`ag paths.
If $X: = (X_t)_{t\geq 0}$ is a one-dimensional L\'evy process
with law $\stP$, then the classical L\'evy-Khintchine formula
states that for all $t\geq 0$ and $\theta\in \RR$,
the characteristic exponent
$\Psi(\theta) : = -t^{-1}\log \stE (e^{\iu\theta X_t})$ satisfies 
\[
\Psi(\theta) = \iu a\theta + \frac{1}{2}\sigma^2\theta^2 + \int_{\RR} (1 - e^{\iu\theta x} + \iu\theta x\Indic{|x|\leq 1})\Pi(\dd x),
\]
where $a\in\mathbb{R}$, $\sigma\geq 0$ and $\Pi$ is a measure
(the \define{L\'evy measure}) concentrated on
$\RR\setminus\{0\}$
such that $\int_{\RR}(1\wedge x^2)\Pi(\dd x)<\infty$.

$(X,\stP)$ is
said to be a \define{(strictly) $\alpha$-stable process}
if it is a L\'evy process which
also satisfies the \define{scaling property}: under $\stP$,
for every $c > 0$,
the process $(cX_{t c^{-\alpha}})_{t \ge 0}$
has the same law as $X$.
It is known that $\alpha \in (0,2]$, and the case $\alpha = 2$
corresponds to Brownian motion, which we exclude.
The L\'evy-Khintchine representation of such a process
is as follows:
$\sigma = 0$, and
$\Pi$ is absolutely continuous with density given by 
\[ 
c_+ x^{-(\alpha+1)} \Indic{x > 0} + c_- \abs{x}^{-(\alpha+1)} \Indic{x < 0},
  \for x \in \RR,
\]
where $c_+, c_- \ge 0$, and $c_+ = c_-$
when $\alpha = 1$.
It holds that $a = (c_+-c_-)/(\alpha-1)$ when $\alpha \ne 1$, and we
specify that $a = 0$ when
$\alpha = 1$; the latter condition is a restriction
which ensures that $X$ is a symmetric
process when $\alpha = 1$, so the only $1$-stable process we consider
is the symmetric Cauchy process.

These choices mean that,
up to a multiplicative constant $c>0$, $X$ has the canonical
characteristic exponent
\[
\Psi(\theta) = \begin{cases}
c\abs{\theta}^\alpha (1  - \iu\beta\tan\frac{\pi\alpha}{2}\sgn\theta) & \alpha\in(0,2)\setminus \{1\}, \\
c\abs{\theta} & \alpha = 1,
\end{cases}
\for\theta\in\mathbb{R},
\]
where $\beta = (c_+- c_-)/(c_+ + c_-)$. For more details, see \citet[\S 14]{Sato}.

For consistency with the literature we appeal to in this article, we shall always parameterise our $\alpha$-stable process such that 
\[ c_+ = \frac{\Gamma(\alpha+1)}{\Gamma(\alpha\rho)\Gamma(1-\alpha\rho)}
  \quad \text{and} \quad
  c_- = \frac{\Gamma(\alpha+1)}{\Gamma(\alpha\rhohat)\Gamma(1-\alpha\rhohat)},
  \]
where
$\rho = \stP(X_t \ge 0) = \stP(X_t > 0)$ is the positivity parameter, 
and $\rhohat = 1-\rho$. 

We take the point of view that the class of stable processes,
with this normalisation, is
parameterised by $\alpha$ and $\rho$; the reader will note that
all the quantities above can be written in terms of these parameters.
We shall restrict ourselves a little further within this class
by excluding the possibility of having only one-sided jumps.
Together with our assumption about the case $\alpha = 1$, this gives
us the following set of admissible parameters:
\begin{eqnarr*}
  \stparamset &=&
  \bigl\{ (\alpha,\rho) : \alpha \in (0,1), \, \rho \in (0,1) \bigr\} \\
  && {} \cup
  \bigl\{ (\alpha,\rho) : \alpha \in (1,2), \, \rho \in (1-1/\alpha, 1/\alpha) \bigr\}
  \cup
  \bigl\{ (\alpha, \rho) = (1, 1/2) \bigr\}.
\end{eqnarr*}

\medskip

After Brownian motion, $\alpha$-stable processes are often considered an exemplary family of processes for which many aspects of the general theory of L\'evy processes  can be illustrated in closed form.
First passage problems, which are relatively straightforward to handle in   the case of Brownian motion, become much harder in the setting of a general L\'evy process on account of the inclusion of jumps.  A collection of articles through the 1960s and early 1970s, appealing largely to potential analytic methods for general Markov processes, were relatively successful in handling a number of first passage problems, in particular for symmetric $\alpha$-stable processes in one or more dimensions.
See, for example, \cite{BGR-st-hit, Get1, Get2, Por-htpr, Rog71} to name but a few.

However, following this cluster of activity, several decades have passed since new results on these problems have appeared. The last few years have seen a number of new, explicit first passage identities for one-dimensional $\alpha$-stable processes, thanks to a better understanding of the intimate relationship between the aforesaid processes and positive self-similar Markov processes. See, for example, \cite{CC06, CPP-bernoulli, ChaKypPar09, JCKuz, n-tuple}.

In this paper we return to the work of \citet{BGR-st-hit}, published in 1961, which gave the law of the position of first entry of a symmetric $\alpha$-stable process into the unit ball. Specifically, we are interested in establishing the same law, but now for all the one-dimensional $\alpha$-stable processes which fall within the parameter regime $\stparamset$;
we remark that \citet[\S3.1, Remark 3]{Por-htpr} found this law for
processes with one-sided jumps, which justifies our exclusion of
these processes in this work.
Our method is modern in the sense that we appeal to the relationship of $\alpha$-stable processes with certain positive self-similar Markov processes. However, there are two notable additional innovations. First, we make use of a type of path censoring. Second, we are able to describe in explicit analytical detail a non-trivial Wiener-Hopf factorisation of an auxiliary L\'evy process from which the desired solution can be sourced.  Moreover, as a consequence of this approach, we are able to deliver a number of additional, related identities in explicit form for $\alpha$-stable processes.

\medskip \noindent
We now state the main results of the paper.
Let $\stP_x$ refer to the law
of $X+x$ under $\stP$, for each $x\in\mathbb{R}$.
We introduce the first hitting time of the interval
$(-1,1)$,
\[ \tau_{-1}^1 = \inf\{ t > 0 : X_t \in (-1,1) \} .
\]
Note that, for $x \ne -1,1$,
$X_{\tau_{-1}^1} \in (-1,1)$ $\stP_x$-a.s.\ so long as $X$ is
not spectrally one-sided. However, in Proposition \ref{Port limit},
we will consider a spectrally negative $\alpha$-stable process,
for which $X_{\tau_{-1}^1}$ may take the value $-1$ with
positive probability.

\medskip
\begin{thm}\label{interval hitting}
Let $x > 1$. Then, when $\alpha \in (0,1]$,
\[ \stP_x(X_{\tau_{-1}^1} \in \dd y, \, \tau_{-1}^1 < \infty)/\dd y
  = \frac{\sin(\pi\alpha\rhohat)}{\pi}
    (x+1)^{\alpha\rho}
    (x-1)^{\alpha\rhohat}
    (1+y)^{-\alpha\rho}
    (1-y)^{-\alpha\rhohat}
    (x-y)^{-1}
    , \]
for $y \in (-1,1)$. When $\alpha \in (1,2)$,
\begin{eqnarr*}
 \stP_x(X_{\tau_{-1}^1} \in \dd y)/\dd y
 &=& \frac{\sin(\pi\alpha\rhohat)}{\pi}
    (x+1)^{\alpha\rho}
    (x-1)^{\alpha\rhohat}
    (1+y)^{-\alpha\rho}
    (1-y)^{-\alpha\rhohat}
    (x-y)^{-1} \\
  && {} - (\alpha-1)
    \frac{\sin(\pi\alpha\rhohat)}{\pi}
    (1+y)^{-\alpha\rho}
    (1-y)^{-\alpha\rhohat}
    \int_1^x (t-1)^{\alpha\rhohat-1} (t+1)^{\alpha\rho-1}\, \dd t,
\end{eqnarr*}
for $y \in (-1,1)$.
\end{thm}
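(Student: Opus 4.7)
The plan is to use a path-censoring argument together with the Lamperti transform to reduce the first-entry question for $X$ to a Wiener-Hopf problem for an auxiliary L\'evy process whose ladder height factors should admit closed-form expressions. Fix $x>1$, and let $\check X$ denote the process obtained from $X$ by excising those time intervals on which $X$ is negative. The strong Markov property together with the scaling of $X$ should make $\check X$ a positive self-similar Markov process of index $\alpha$, taking values in $(0,\infty)$ and possibly being killed (the latter relevant only for $\alpha\in(1,2)$, when $X$ can hit $0$ during a negative excursion with positive probability). The crucial observation is that $\{X_{\tau_{-1}^1}>0\}$ coincides with the event that $\check X$ enters $(0,1)$, and on this event $X_{\tau_{-1}^1}=\check X_\sigma$, where $\sigma:=\inf\{t>0:\check X_t<1\}$.

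Next, by the Lamperti representation, write $\check X_t=\exp(\xi_{\varphi(t)})$ for some possibly killed L\'evy process $\xi$ started from $\log x>0$, with a suitable additive time change $\varphi$ depending only on $\xi$. The analytical heart of the argument will be to read off the L\'evy-Khintchine triplet of $\xi$ (its drift, L\'evy measure, and killing rate) directly from the dynamics of $X$ and the censoring, and then to compute the Wiener-Hopf factorisation of $\xi$ in terms of gamma-function quotients involving $\alpha\rho$ and $\alpha\rhohat$. Once this is in hand, one reads off the joint law of $(\xi_{T-},\xi_T)$ at the first-passage time $T$ of $\xi$ below $0$; exponentiating and translating via Lamperti then gives the restriction of $\stP_x(X_{\tau_{-1}^1}\in\cdot)$ to $(0,1)$.

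To obtain the restriction to $(-1,0)$, I would run the same construction with the dual process $-X$, which is $\alpha$-stable with positivity parameter $\rhohat$ in place of $\rho$. Summing the two pieces should give the formula in the case $\alpha\in(0,1]$, with the gamma constants combining via Euler's reflection formula into the prefactor $\sin(\pi\alpha\rhohat)/\pi$. For $\alpha\in(1,2)$, the Lamperti process $\xi$ acquires a nonzero drift (proportional to $\alpha-1$) and, equivalently, $X$ has positive probability of hitting $0$; both effects give rise to additional mass which, after careful bookkeeping of the killing contribution to the potential of $\xi$, should produce the correction integral in the second formula.

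The step I expect to be the main obstacle is the explicit Wiener-Hopf factorisation of $\xi$: the censored Lamperti process appears to be new and, although structurally similar to other Lamperti-stable or hypergeometric L\'evy processes in the literature, its characteristic exponent must be factorised by hand. All of the structural constants in the theorem, and in particular the exponents $\alpha\rho$ and $\alpha\rhohat$ of the factors $(1\pm y)$ and $(x\pm 1)$, should emerge from the beta-integral-type computations that come out of the Laplace exponents of the ladder height processes of this $\xi$.
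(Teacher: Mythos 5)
Your overall architecture—censor the negative excursions of $X$ to obtain a pssMp, pass to its Lamperti transform $\xi$, decompose $\xi$ and compute its Wiener--Hopf factorisation explicitly, then translate first-passage quantities back to $X$—is indeed the paper's strategy. However, the ``crucial observation'' at the heart of your reduction is false, and without repairing it the argument cannot give the stated law.

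You claim that $\{X_{\tau_{-1}^1}>0\}$ coincides with the event that the censored process $\check X$ enters $(0,1)$, with agreement of the landing positions. Only one inclusion holds. If $X$ started at $x>1$ first enters $(-1,1)$ at a negative value, say $X_{\tau_{-1}^1}=-\tfrac12$, then the negative excursion beginning at $\tau_{-1}^1$ is excised by the censoring; if $X$ subsequently re-enters $(0,\infty)$ at a point of $(0,1)$, then $\check X$ enters $(0,1)$ even though $X_{\tau_{-1}^1}<0$. Thus the event you describe is strictly larger than $\{X_{\tau_{-1}^1}>0\}$, and the law of the first entry of $\check X$ into $(0,1)$ is not the restriction of $\stP_x(X_{\tau_{-1}^1}\in\cdot)$ to $(0,1)$. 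The companion dual computation inherits the same defect, so ``summing the two pieces'' does not produce the hitting distribution. The missing idea—and the step the paper takes instead—is to apply an affine transformation \emph{before} censoring: replace $X$ by $\widetilde X := (X+1)/(x+1)$, which is again an $\alpha$-stable process, now started at $1$, and for which the target interval $(-1,1)$ becomes $(0,b)$ with $b=2/(x+1)<1$. Since $(0,b)$ lies entirely in the positive half-line, the first entry of $\widetilde X$ into $(0,b)$ is exactly the first entry of the censored process into $(0,b)$, which under Lamperti is the first passage of $\xi$ below $\log b$. This one reduction handles both signs of the landing position simultaneously, and no dual-process summation is needed. Once this is in place, the $\alpha\in(0,1]$ case follows from the triple-law expression $\LevP_0(\xi_{S_a^-}\in\dd w)=\int_{[0,-a]}\hat U(\dd z)\,\pi_{\hat H}(-w-z)\,\dd w$; the $\alpha\in(1,2)$ case is not best understood as $\xi$ ``acquiring a drift proportional to $\alpha-1$''—rather, $\xi$ now drifts to $-\infty$ (the ascending ladder process is killed), which is why the paper switches to the second factorisation identity and inverts Laplace transforms to recover the overshoot law, with the extra integral term in the statement arising from that inversion rather than from a drift coefficient.
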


\noindent
When $X$ is symmetric, Theorem \ref{interval hitting}
reduces immediately to
Theorems B and C of \cite{BGR-st-hit}.
Moreover, the following hitting probability can be
obtained.

\begin{cor}\label{interval hitting prob}
When $\alpha \in (0,1)$, for $x > 1$,
\[ \stP_x( \tau_{-1}^1 = \infty)
  = \frac{\Gamma(1-\alpha\rho)}{\Gamma(\alpha\rhohat)\Gamma(1-\alpha)}
  \int_0^{\frac{x-1}{x+1}} t^{\alpha\rhohat - 1} (1-t)^{-\alpha} \, \dd t . \]
\end{cor}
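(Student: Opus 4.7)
The strategy is to integrate the explicit hitting density from Theorem~\ref{interval hitting} (taking the case $\alpha\in(0,1]$, applied to our $\alpha\in(0,1)$) over $y\in(-1,1)$ and then take the complement. Since $X$ is transient for $\alpha\in(0,1)$, the density is sub-probability, and
\[ \stP_x(\tau_{-1}^1=\infty) = 1 - \frac{\sin(\pi\alpha\rhohat)}{\pi}(x+1)^{\alpha\rho}(x-1)^{\alpha\rhohat}\int_{-1}^1 (1+y)^{-\alpha\rho}(1-y)^{-\alpha\rhohat}(x-y)^{-1}\,\dd y. \]
So the task reduces to evaluating the integral in closed form.

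The decisive step is the M\"obius substitution $u = (1-y)(x+1)/(2(x-y))$, suggested by the identity $(1-y)(x+1)+(1+y)(x-1) = 2(x-y)$. This gives the complementary relation $1-u = (1+y)(x-1)/(2(x-y))$ and the inversion $x-y = (x^2-1)/(x+1-2u)$. Substituting and exploiting $\alpha\rho + \alpha\rhohat = \alpha$, all external factors of $(x\pm 1)^{\alpha\rho}$ and $(x\pm 1)^{\alpha\rhohat}$ cancel against factors produced by the change of variables, leaving
\[ \stP_x(\tau_{-1}^1<\infty) = \frac{\sin(\pi\alpha\rhohat)}{\pi}\,2^{1-\alpha}(x+1)^{\alpha-1}\int_0^1 u^{-\alpha\rhohat}(1-u)^{-\alpha\rho}(1-zu)^{\alpha-1}\,\dd u \]
with $z = 2/(x+1) = 1-(x-1)/(x+1)$. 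By Euler's integral representation, this is a Gamma-function multiple of $\Ghg{1-\alpha}{1-\alpha\rhohat}{2-\alpha}{z}$.

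In parallel, performing the substitutions $t\mapsto 1-t$ followed by $s\mapsto zr$ in $\int_a^1 t^{\alpha\rhohat-1}(1-t)^{-\alpha}\,\dd t$ (with $a=(x-1)/(x+1)$, so $1-a = z$) produces another Euler integral, this time representing $\Ghg{1-\alpha\rhohat}{1-\alpha}{2-\alpha}{z}$ up to Gamma prefactors. The symmetry $\Ghgsymb(p,q;r;z)=\Ghgsymb(q,p;r;z)$ together with the reflection formula $\sin(\pi\alpha\rhohat)/\pi = 1/(\Gamma(\alpha\rhohat)\Gamma(1-\alpha\rhohat))$ then identifies
\[ \stP_x(\tau_{-1}^1<\infty) = \frac{\Gamma(1-\alpha\rho)}{\Gamma(\alpha\rhohat)\Gamma(1-\alpha)}\int_a^1 t^{\alpha\rhohat-1}(1-t)^{-\alpha}\,\dd t. \]
Since the same prefactor times the complete Beta integral $\Beta(\alpha\rhohat,1-\alpha) = \Gamma(\alpha\rhohat)\Gamma(1-\alpha)/\Gamma(1-\alpha\rho)$ equals $1$, subtracting from $1$ yields the stated formula. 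The only nonobvious move is spotting the M\"obius substitution; once that is in hand, the remaining work is routine Gamma-function bookkeeping and an appeal to hypergeometric symmetry.
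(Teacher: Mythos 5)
Your proof is correct and is essentially the paper's own argument: both integrate the Theorem~\ref{interval hitting} density over $(-1,1)$, apply the same M\"obius change of variables (your single substitution $u=(1-y)(x+1)/(2(x-y))$ is the composition of the paper's two-step $z=(y+1)/2$, $w=(1-z)/(1-2z/(x+1))$), and then recognise the result as an incomplete Beta integral. The only cosmetic difference is that the paper cites \cite[3.197.3, 8.391]{GR} directly, whereas you reach the same identification via Euler's integral representation of $\Ghgsymb$ together with the symmetry $\Ghgsymb(p,q;r;z)=\Ghgsymb(q,p;r;z)$.
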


\noindent
This extends Corollary 2 of \cite{BGR-st-hit}, as can be seen by
differentiating and using the doubling formula
\cite[8.335.2]{GR} for the gamma function.

The spectrally one-sided case can be found as the limit of
Theorem \ref{interval hitting},
as we now explain. The first part of the coming proposition is due to
\citet{Por-htpr}, but we re-state it for the sake of clarity.
\begin{prop}
\label{Port limit}
Let $\alpha \in (1,2)$, and suppose that $X$ is spectrally negative,
that is, $\rho = 1/\alpha$.
Then, the hitting distribution of $[-1,1]$ is given by
\begin{eqnarr*}
  \stP_x(X_{\tau_{-1}^{1}} \in \dd y)
  &=& \frac{\sin \pi(\alpha-1)}{\pi}
  (x-1)^{\alpha-1} (1-y)^{1-\alpha} (x-y)^{-1} \dd y \\
  && {} + \frac{\sin \pi(\alpha-1)}{\pi}
  \int_0^{\frac{x-1}{x+1}} t^{\alpha-2} (1-t)^{1-\alpha} \, \dd t
  \, \delta_{-1}(\dd y),
  \for x > 1, \, y \in [-1,1],
\end{eqnarr*}
where $\delta_{-1}$ is the unit
point mass at $-1$.
Furthermore, the measures on $[-1,1]$ given in Theorem
\ref{interval hitting} converge weakly, as
$\rho \to 1/\alpha$, to the limit above.
\end{prop}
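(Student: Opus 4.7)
\begin{proofp1}
The first assertion is due to \citet{Por-htpr}, on which we rely. It remains to verify that the absolutely continuous measures $\mu_\rho$ of Theorem~\ref{interval hitting} converge weakly to Port's mixed measure as $\rho\uparrow 1/\alpha$. The main conceptual point is that the two summands of the density in Theorem~\ref{interval hitting} both diverge like $(1+y)^{-\alpha\rho}$ at $y=-1$, yet their combined density is finite; only after an algebraic regrouping can one see the absolutely continuous part of the limit separately from the mass accumulating at $y=-1$.

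Abbreviate the density as $f_\rho(y) = (1+y)^{-\alpha\rho}(1-y)^{-\alpha\rhohat}[K_1(x-y)^{-1} - K_2]$, with $K_1 = \tfrac{\sin(\pi\alpha\rhohat)}{\pi}(x+1)^{\alpha\rho}(x-1)^{\alpha\rhohat}$ and $K_2 = (\alpha-1)\tfrac{\sin(\pi\alpha\rhohat)}{\pi}\int_1^x (t-1)^{\alpha\rhohat-1}(t+1)^{\alpha\rho-1}\dd t$. The identity $(x-y)^{-1} = (x+1)^{-1} + (1+y)(x+1)^{-1}(x-y)^{-1}$ lets one split $f_\rho = g_\rho + h_\rho$ with
\[
g_\rho(y) = \frac{\sin(\pi\alpha\rhohat)}{\pi}(x+1)^{\alpha\rho-1}(x-1)^{\alpha\rhohat}(1+y)^{1-\alpha\rho}(1-y)^{-\alpha\rhohat}(x-y)^{-1},
\]
and $h_\rho(y) = M_\rho(1+y)^{-\alpha\rho}(1-y)^{-\alpha\rhohat}$, where $M_\rho := K_1(x+1)^{-1} - K_2$. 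In $g_\rho$, the exponent $1-\alpha\rho$ of $(1+y)$ becomes non-negative in the limit, removing the singularity at $y=-1$. At $\rho=1/\alpha$, both $K_1(x+1)^{-1}$ and $K_2$ reduce to $\tfrac{\sin(\pi(\alpha-1))}{\pi}(x-1)^{\alpha-1}$ (using $\int_1^x (t-1)^{\alpha-2}\dd t = (x-1)^{\alpha-1}/(\alpha-1)$), so $M_\rho \to 0$; smoothness in $\rho$ gives $M_\rho = O(1-\alpha\rho)$.

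As $\rho\uparrow 1/\alpha$, $g_\rho$ converges pointwise on $[-1,1)$, uniformly on each $[-1,1-\epsilon]$, to the absolutely continuous density $g(y) = \tfrac{\sin(\pi(\alpha-1))}{\pi}(x-1)^{\alpha-1}(1-y)^{1-\alpha}(x-y)^{-1}$ of the proposition; dominated convergence with a bound of the form $(1-y)^{-(\alpha-1+\delta)}$ near $y=1$ upgrades this to $L^1([-1,1])$ convergence. For $h_\rho$, the normalising integral $\int_{-1}^1 (1+y)^{-\alpha\rho}(1-y)^{-\alpha\rhohat}\dd y = 2^{1-\alpha}\Beta(1-\alpha\rho,1-\alpha\rhohat)$ diverges like $(1-\alpha\rho)^{-1}$, and the resulting probability density concentrates on every neighbourhood of $y=-1$. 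Together with $M_\rho = O(1-\alpha\rho)$, this shows $h_\rho(y)\dd y \to C\delta_{-1}$ weakly for some finite $C \geq 0$. Since each $\mu_\rho$ is a probability measure, by recurrence of $X$ for $\alpha\in(1,2)$, conservation of mass forces $C = 1 - \int_{-1}^1 g(y)\dd y$; the substitution $s = (t-1)/(t+1)$ in the resulting integral identifies $C$ with the atomic coefficient in the proposition.

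The principal technical step is the concentration statement for $h_\rho$ and the resulting identification of $C$; both reduce to elementary Beta-function asymptotics once the decomposition $f_\rho = g_\rho + h_\rho$ is in place. The conservation-of-mass argument is a convenient shortcut, but one could equally establish $C$ by direct Taylor expansion of $M_\rho$ in $1-\alpha\rho$ followed by a change of variable.
\end{proofp1}
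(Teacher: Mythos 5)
Your proposal is correct, and it closely parallels the paper's own argument: the algebraic splitting you perform via $(x-y)^{-1}=(x+1)^{-1}+(1+y)(x+1)^{-1}(x-y)^{-1}$ reproduces exactly the intermediate identity \eqref{**} that was obtained during the proof of Theorem~\ref{interval hitting} for $\alpha\in(1,2)$, where the second term already carries the explicit factor $(1-\alpha\rho)$. Starting from that identity, the paper takes a slightly slicker route to weak convergence: it treats the \emph{entire} density as $g_\rho$, shows it converges pointwise to Port's absolutely continuous density $f_{1-x}(1-y)$, dominates it by a single function integrable on every $[z,1]$ with $z>-1$, and concludes convergence of the distribution functions at all continuity points; together with preservation of total mass $1$, this gives weak convergence without ever computing the atomic coefficient. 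Your version instead splits $f_\rho=g_\rho+h_\rho$, proves $L^1$ convergence of $g_\rho$ and concentration of $h_\rho$ at $-1$, and identifies the atom's mass by conservation. Both are valid; the paper's CDF argument avoids the final Beta-function identification that your conservation-of-mass step relegates to the reader.

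Two places in your write-up should be tightened. First, ``smoothness in $\rho$ gives $M_\rho = O(1-\alpha\rho)$'' is not quite enough: to conclude that $h_\rho(y)\,\dd y$ converges (rather than merely remaining bounded), you need $M_\rho/(1-\alpha\rho)$ to have a limit. This does follow from differentiability of $M_\rho$ in $\rho$ at $\rho=1/\alpha$ (or, more concretely, from the exact identity $M_\rho = (1-\alpha\rho)\cdot\frac{2\sin(\pi\alpha\rhohat)}{\pi}\int_1^x(t-1)^{\alpha\rhohat-1}(t+1)^{\alpha\rho-2}\,\dd t$, which is what the paper's \eqref{**} encodes and which your partial-fraction split is implicitly reproducing), but the ``$O$'' statement alone does not suffice and should be upgraded. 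Second, the identification of $C$ with Port's atomic coefficient is asserted via a substitution but not carried out; it amounts to the Beta-integral evaluation $\int_{-1}^1 g(y)\,\dd y$, and while elementary it is exactly the kind of computation the paper's CDF argument was designed to sidestep. Neither of these is a fatal gap, but both should be made explicit.
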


The following killed potential is also available.

\begin{thm}\label{potential int}
Let $\alpha \in (0,1]$, $x > 1$ and $y>1$. Then,
\begin{eqnarr*}
  \eqnarrLHS{\stE_x \int_0^{\tau_{-1}^1} \Indic{X_t \in \dd y} \, \dd t / \dd y}
  &=&
  \begin{cases}
    \dfrac{1}{\Gamma(\alpha\rho)\Gamma(\alpha\rhohat)}
    \biggl(\dfrac{x-y}{2}\biggr)^{\alpha-1}
    \displaystyle\int_1^{\frac{1-xy}{y-x}}
      (t-1)^{\alpha\rho-1} (t+1)^{\alpha\rhohat-1} \, \dd t,
    & 1 < y < x, \\[1em]
    \dfrac{1}{\Gamma(\alpha\rho)\Gamma(\alpha\rhohat)}
    \biggl(\dfrac{y-x}{2}\biggr)^{\alpha-1}
    \displaystyle\int_1^{\frac{1-xy}{x-y}}
      (t-1)^{\alpha\rhohat-1} (t+1)^{\alpha\rho-1} \, \dd t,
    & y > x.
  \end{cases}
\end{eqnarr*}
\end{thm}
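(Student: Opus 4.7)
The plan is to transfer the problem, via path censoring and the Lamperti transform, to a killed potential computation for the auxiliary L\'evy process $\xi$ whose Wiener-Hopf factorisation is established earlier in the paper. Let $Y$ denote the path-censored positive self-similar Markov process constructed from $X$ in the earlier sections, and set $T = \inf\{t : Y_t < 1\}$. For $y > 1$ and $x > 1$, the starting point is the identification
\[
  \stE_x \int_0^{\tau_{-1}^1} \Indic{X_t \in \dd y} \, \dd t
  = \stE_x \int_0^{T} \Indic{Y_t \in \dd y} \, \dd t,
\]
which rests on the fact that $X$ visits the region $(1, \infty)$ only during positive excursions---exactly the pieces of path retained by the censoring---so the occupation at levels $y > 1$ is unchanged. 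The stopping times $\tau_{-1}^1$ and $T$ are matched via the censoring time-change, with a strong Markov argument handling the case where $X$ enters $(-1, 1)$ through $(-1, 0)$ during a negative excursion.

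Next, the Lamperti transform carries $Y$ (started at $x$) to $\xi$ (started at $\log x$) and maps $T$ to the first passage time $\sigma := \inf\{s : \xi_s < 0\}$. Combined with the Lamperti time change $\dd t = \exp(\alpha \xi_s) \, \dd s$ and the substitution $w = \log y$, this yields an overall Jacobian of $y^{\alpha - 1}$, so that the sought occupation density at $y$ equals $y^{\alpha - 1}$ times the killed-potential density of $\xi$ evaluated at $(\log x, \log y)$. A standard Silverstein-type formula then expresses this killed potential in terms of the renewal densities of the ascending and descending ladder-height processes of $\xi$, both of which are explicit from the Wiener-Hopf factorisation.

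It remains to evaluate the resulting convolution and translate back to the $x, y$ variables. A substitution of the form $t = (1 + s)/(1 - s)$ converts the beta-type integral into $\int_1^{T^*} (t-1)^{\alpha\rho - 1}(t+1)^{\alpha\rhohat - 1} \, \dd t$, with endpoint $T^* = (xy - 1)/\abs{x - y}$ arising from exponentiating linear combinations of $\log x$ and $\log y$; the two cases $1 < y < x$ and $y > x$ correspond to $\log y$ lying below or above $\log x$ respectively, which swaps the roles of $\alpha\rho$ and $\alpha\rhohat$ and produces the two expressions in the statement. The main obstacle I expect to be the occupation-time identification in the first step, which is subtle because the censored process only directly registers entry into $(0, 1)$; once this is in place, the hypergeometric integral evaluation is routine.
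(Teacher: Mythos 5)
Your first step is the weak point, and it is not merely a subtlety to be patched but an identity that is false. You claim
\[
  \stE_x \int_0^{\tau_{-1}^1} \Indic{X_t \in \dd y} \, \dd t
  = \stE_x \int_0^{T} \Indic{Y_t \in \dd y} \, \dd t ,
  \qquad T = \inf\{t : Y_t < 1\}.
\]
Unrolling the censoring time-change on the right-hand side (for $y>1$ the occupation of $\dd y$ with respect to $\dd t$ and $\dd A_t$ agree) shows it equals
$\stE_x \int_0^{\tau^1_0} \Indic{X_t \in \dd y}\,\dd t$
with $\tau^1_0 = \inf\{t : X_t \in (0,1)\}$, \emph{not} $\tau^1_{-1}$. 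Since $X$ can enter $(-1,0]$ before ever touching $(0,1)$ -- for instance by jumping from $(1,\infty)$ to a negative value, creeping up into $(-1,0]$, and then jumping back above $1$ -- we have $\tau^1_{-1} < \tau^1_0$ with positive probability, and on this event $X$ accumulates strictly more occupation of $y>1$ on $[\tau^1_{-1}, \tau^1_0]$. No strong Markov argument makes the difference vanish; the extra term is the positive quantity $\stE_x\bigl[\Indic{X_{\tau^1_{-1}} \in (-1,0]}\, \stE_{X_{\tau^1_{-1}}} \int_0^{\tau^1_0} \Indic{X_t \in \dd y}\,\dd t\bigr]$. The downstream consequence is that your Silverstein computation would be evaluated at $(\log x, \log y)$, whereas the correct arguments are $\bigl(\log\frac{x+1}{2}, \log\frac{y+1}{2}\bigr)$; substituting $(\log x, \log y)$ produces an integral with the wrong upper limit (e.g.\ $\frac{(y-1)x}{(x-1)y}$ rather than $\frac{(y-1)(x+1)}{(y+1)(x-1)}$), so your route does not land on the stated formula.

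The paper sidesteps this entirely by scaling \emph{first}: the affine map $z \mapsto (z+1)/(x+1)$ together with the self-similarity of $X$ converts the problem $(\stP_x, \text{interval } (-1,1))$ into $(\stP_1, \text{interval } (0,b))$ with $b = 2/(x+1)$. Because $(0,b)$ sits strictly inside the positive half-line, the path censoring gives an \emph{exact} correspondence of first-entry times -- $\tau^b_0(X)$ matches $\tau^b_0(Y)$ through the time-change $\gamma$ with no discrepancy -- and then the Lamperti transform hands you the killed potential $u(p,w)$ of $\xi$ at the correct arguments $p = \log\frac{x+1}{2}$, $w = \log\frac{y+1}{2}$. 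The rest of your sketch (Silverstein's identity with the explicit renewal densities, the $t = (1+s)/(1-s)$ change of variables, the resulting endpoint $(xy-1)/\abs{x-y}$) is sound and coincides with the paper, but it must be fed the post-scaling boundary data, not $(\log x, \log y)$.
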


\noindent
To obtain the potential of the previous theorem  for $x < -1$, and $y < -1$, one may easily appeal to duality. 
In the case that $x<-1$ and $y>1$, one notes that 
\begin{equation}
 \stE_x \int_0^{\tau_{-1}^1} \Indic{X_t \in \dd y} \, \dd t   = \stE_x \stE_\Delta \int_0^{\tau_{-1}^1} \Indic{X_t \in \dd y} \, \dd t , 
 \label{DELTA}
\end{equation}
where the quantity $\Delta$ is randomised
according to the distribution of $X_{\tau^+_{-1}}\Indic{X_{\tau^+_{-1}}>1}$, with
\[
\tau^+_{-1}= \inf \{ t > 0 : X_t > -1 \}.
\]
Although the distribution of $X_{\tau^+_{-1}}$ is
available from \cite{Rogozin72}, and hence the right hand side of
(\ref{DELTA}) can be written down explicitly, it does not seem
to be easy to find a convenient closed form expression for the
corresponding potential density.

Regarding this potential, let us finally remark that our methods
give an explicit expression for this potential even
when $\alpha \in (1,2)$, but again, there does not seem to be
a compact expression for the density.

\bigskip

\noindent
A further result concerns the first passage of $X$ into the half-line $(1,\infty)$  before hitting zero. Let
\[ 
\tau_1^+ = \inf \{ t > 0 : X_t > 1 \} \text{ and }
  \tau_0 = \inf \{ t > 0 : X_t = 0 \} . 
  \]
Recall that when $\alpha \in (0,1]$, $\stP_x(\tau_0 = \infty) = 1$,
while when $\alpha \in (1,2)$, $\stP_x(\tau_0 < \infty) = 1$,
for $x \ne 0$.
In the latter case, we can obtain a hitting probability as follows.

\medskip
\begin{thm}\label{HP 0 before up}
Let $\alpha \in (1,2)$. When $0 < x < 1$,
\[ \stP_x(\tau_0 < \tau_1^+)
  = (\alpha-1)
  x^{\alpha-1}
  \int_1^{1/x} (t-1)^{\alpha\rho-1} t^{\alpha\rhohat-1} \, \dd t .
\]
When $x < 0$, 
\[ \stP_x(\tau_0 < \tau_1^+)
  = (\alpha-1)
  (-x)^{\alpha-1}
  \int_1^{1-1/x} (t-1)^{\alpha\rhohat-1} t^{\alpha\rho-1}\, \dd t .
\]
\end{thm}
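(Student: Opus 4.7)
The plan is to exploit the censored positive self-similar Markov process $Y$ on $(0,\infty)$, obtained from $X$ by excising its negative excursions, together with its Lamperti-transformed L\'evy process $\xi$, whose Wiener-Hopf factorisation was established in the preceding sections.

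For the case $0 < x < 1$: since $X$ starts in $(0,1)$, both $\tau_0$ and $\tau_1^+$ depend only on the positive-valued portion of the path, so the excision of negative excursions affects neither whether these events occur nor their relative order. Consequently, $\stP_x(\tau_0 < \tau_1^+)$ equals the probability that $Y$, started from $x$, is absorbed at $0$ before entering $(1,\infty)$. Under the Lamperti time change this transfers to $\xi$: started from $\log x < 0$, $\xi$ never reaches $[0,\infty)$ during its lifetime, and by spatial homogeneity this probability equals $\stP(\overline\xi_\infty < \log(1/x))$, where $\overline\xi_\infty$ is the all-time supremum of $\xi$ under the law of $\xi$ issued from $0$ (finite, since $\xi$ either drifts to $-\infty$ or is killed). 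The distribution of $\overline\xi_\infty$ is already in hand from the ascending Wiener-Hopf factor of $\xi$, and inserting it followed by the change of variable $t = e^u$ delivers the stated integral.

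For the case $x < 0$: $X$ has two-sided jumps and no Gaussian component, so it cannot creep upward; hence $T := \inf\{s > 0 : X_s > 0\}$ satisfies $T < \tau_0$ almost surely and $X_T > 0$ strictly, with $\tau_1^+ = T < \tau_0$ on $\{X_T > 1\}$. The strong Markov property at $T$ then yields
\[
  \stP_x(\tau_0 < \tau_1^+) = \int_0^1 \stP_z(\tau_0 < \tau_1^+) \, \stP_x(X_T \in \dd z) .
\]
Inserting the Part 1 expression for $\stP_z(\tau_0 < \tau_1^+)$ together with Rogozin's classical overshoot formula
\[
  \stP_{-y}(X_T \in \dd z) = \frac{\sin(\pi \alpha \rho)}{\pi} \cdot \frac{y^{\alpha\rho} z^{-\alpha\rho}}{y + z} \, \dd z , \qquad z > 0,\ y := -x > 0 ,
\]
and then applying Fubini with an incomplete beta-function identity collapses the double integral to the claimed single-integral form.

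The main obstacle is the algebraic simplification in the $x < 0$ case: the chained change of variables turning the double integral into the stated closed form is a beta-type manipulation that needs some care. In Part 1, once the law of $\overline\xi_\infty$ is supplied by the earlier sections, the remaining work is a routine substitution in its cumulative distribution function.
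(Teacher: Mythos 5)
Your proposal is correct, and it follows a genuinely different route from the one in the paper. The paper translates the problem via duality and an $h$-transform to the hitting probability $\stPhatup_{1-x}(\tau_1 < \infty)$ of the point $1$ by the dual process conditioned to stay positive, applies the Lamperti transform to $\Xhatup$ to obtain $\xihatup$, invokes potential theory (Bertoin's Theorem II.19 and Silverstein's identity) together with the Wiener--Hopf data of $\xihatup$ taken from \cite{n-tuple}, and finally pins down the capacity constant by a limiting argument; this treats $x \in (0,1)$ and $x < 0$ within one formula. Your approach stays with the censored process $Y$ and its Lamperti transform $\xi$ constructed in Sections 3--5, reducing $\stP_x(\tau_0 < \tau_1^+)$ for $0<x<1$ to $\LevP_0(\overline\xi_\infty \le \log(1/x)) = \kappa(0)\,U([0,\log(1/x)])$, which is available from Theorem \ref{big alpha WH} and the subsequent proposition; the Laplace transform check $\int_0^\infty e^{-\lambda b}\,\LevP_0(\overline\xi_\infty\in\dd b) = \kappa(0)/\kappa(\lambda)$ confirms the claimed closed form. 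For $x<0$ you use the strong Markov property at $\tau_0^+$ (using that $X$ cannot creep upward, so $\tau_0^+ < \tau_0$ a.s.) together with Rogozin's overshoot law; the resulting double integral does collapse to the stated single integral---differentiating in $y=-x$ reduces it to the hypergeometric identity $\frac{\sin(\pi\alpha\rho)}{\pi}\int_0^1 \frac{w^{-\alpha\rho}(1-w)^{\alpha\rho-1}}{y+w}\dd w = y^{-\alpha\rho}(1+y)^{\alpha\rho-1}$. What each approach buys: yours is more self-contained (it uses only the paper's own Wiener--Hopf factorisation of $\xi$ and a classical overshoot law, with no need to import the ladder structure of $\xihatup$ or to determine an unknown capacity constant), but it bifurcates into two cases and the $x<0$ case requires a nontrivial Fubini/beta calculation that your sketch leaves unjustified; the paper's route is more uniform and closer in spirit to its other proofs, at the cost of bringing in the conditioned process and an external identification of its ladder renewal densities. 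One small caveat: your phrase ``inserting it followed by the change of variable $t=e^u$'' understates the algebra needed in the $0<x<1$ case, since the proposition's expression for $U(\dd x)$ contains an unevaluated tail integral; matching it to the stated closed form requires an integration by parts (or, more cleanly, a direct Laplace inversion of $\kappa(0)/\kappa(\lambda)$), not merely a substitution.
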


It is not difficult to push Theorem \ref{HP 0 before up} a little further to give the law of the position of first entry into $(1,\infty)$ on the event $\{\tau^+_1<\tau_0\}$. Indeed,  by the Markov property, for $x < 1$,
\begin{eqnarr}
  \stP_x(X_{\tau_1^+} \in \dd y, \, \tau_1^+ < \tau_0)
  &=& \stP_x(X_{\tau_1^+} \in \dd y) - \stP_x(X_{\tau_1^+} \in \dd y, \tau_0 < \tau_1^+) \notag\\
  &=& \stP_x(X_{\tau_1^+} \in \dd y) - \stP_x(\tau_0 < \tau_1^+) \stP_0(X_{\tau_1^+} \in \dd y).
 \label{put-in-Rog}
\end{eqnarr}
Moreover, \citet{Rogozin72} found that, for  $x < 1$ and $y>1$,
\begin{equation} \stP_x(X_{\tau_1^+} \in \dd y)
  = \frac{\sin(\pi\alpha\rho)}{\pi} (1-x)^{\alpha\rho} (y-1)^{-\alpha\rho} (y-x)^{-1} \, \dd y.
  \label{Rog-first}
   \end{equation}
Hence substituting (\ref{Rog-first}) together with the hitting probability from Theorem \ref{HP 0 before up} into (\ref{put-in-Rog})
yields the following corollary.
\medskip
\begin{cor}\label{FP before 0}
Let $\alpha \in (1,2)$ Then, when $0 < x < 1$,
\[
\begin{split}
   \stP_x(X_{\tau_1^+} \in \dd y, \, \tau_1^+ < \tau_0) / \dd u
  &= \frac{\sin(\pi\alpha\rho)}{\pi} (1-x)^{\alpha\rho} (y-1)^{-\alpha\rho} (y-x)^{-1} \\
  & {} - (\alpha-1)
    \frac{\sin(\pi\alpha\rho)}{\pi}
    x^{\alpha-1}
    (y-1)^{-\alpha\rho}
    y^{-1}
    \int_1^{1/x} (t-1)^{\alpha\rho-1} t^{\alpha\rhohat-1}\, \dd t,
\end{split}
\]
for $y>1.$ When $x < 0$,
\[
\begin{split}
  \stP_x(X_{\tau_1^+} \in \dd y, \, \tau_1^+ < \tau_0)& / \dd y
  = \frac{\sin(\pi\alpha\rho)}{\pi} (1-x)^{\alpha\rho} (y-1)^{-\alpha\rho} (y-x)^{-1} \\
  & {} - (\alpha-1)
    \frac{\sin(\pi\alpha\rho)}{\pi}
    (-x)^{\alpha-1}
    (y-1)^{-\alpha\rho}
    y^{-1}
    \int_1^{1-1/x} (t-1)^{\alpha\rhohat-1} t^{\alpha\rho-1}\, \dd t,
\end{split}
\]
for $y>1$.
\end{cor}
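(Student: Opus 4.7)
The plan is to execute exactly the derivation already sketched in the discussion preceding the statement: combine Theorem \ref{HP 0 before up} with Rogozin's formula \eqref{Rog-first} via the Markov-property identity \eqref{put-in-Rog}. First I would justify \eqref{put-in-Rog} rigorously. Since $\tau_0$ is a stopping time, $X_{\tau_0} = 0$ on $\{\tau_0 < \infty\}$, and for $\alpha \in (1,2)$ the point $0$ is regular for itself, the strong Markov property applied at $\tau_0$ gives
\[
\stP_x(X_{\tau_1^+} \in \dd y, \, \tau_0 < \tau_1^+)
= \stP_x(\tau_0 < \tau_1^+)\, \stP_0(X_{\tau_1^+} \in \dd y),
\]
after which subtracting from $\stP_x(X_{\tau_1^+} \in \dd y)$ yields \eqref{put-in-Rog}.

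Next I would insert the explicit ingredients. Evaluating \eqref{Rog-first} at a general $x < 1$ and separately at $x = 0$ gives the two densities
\[
\stP_x(X_{\tau_1^+} \in \dd y)/\dd y = \frac{\sin(\pi\alpha\rho)}{\pi}(1-x)^{\alpha\rho}(y-1)^{-\alpha\rho}(y-x)^{-1},
\qquad
\stP_0(X_{\tau_1^+} \in \dd y)/\dd y = \frac{\sin(\pi\alpha\rho)}{\pi}(y-1)^{-\alpha\rho}y^{-1}.
\]
Substituting these into \eqref{put-in-Rog}, together with the two formulas for $\stP_x(\tau_0 < \tau_1^+)$ supplied by Theorem \ref{HP 0 before up} in the cases $0 < x < 1$ and $x < 0$ respectively, and collecting the common factor $\frac{\sin(\pi\alpha\rho)}{\pi}(y-1)^{-\alpha\rho}$, reproduces the two displayed expressions in the corollary line by line.

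No step here poses a genuine obstacle: once Theorem \ref{HP 0 before up} and \eqref{Rog-first} are granted, the argument is algebraic bookkeeping. The only point meriting any care is the Markov decomposition, and specifically the appeal to $\stP_0$ as the law of the process restarted from $0$; this is legitimate precisely because $\alpha \in (1,2)$ makes $\{0\}$ polar-free and hit with positive probability, so both sides of the identity above are well defined. With that justification in place, the corollary is immediate.
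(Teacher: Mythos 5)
Your proposal is correct and follows essentially the same route as the paper: the paper also derives the corollary by inserting Rogozin's formula \eqref{Rog-first} and the hitting probability of Theorem \ref{HP 0 before up} into the Markov decomposition \eqref{put-in-Rog}, then collecting terms. The only difference is that you supply an explicit justification of the strong Markov step at $\tau_0$, which the paper leaves implicit.
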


\noindent

\medskip \noindent
We conclude this section by giving an overview of the rest of the paper. In Section 2, we recall the Lamperti transform
and discuss its relation to $\alpha$-stable processes. In Section 3, we explain the operation
which gives us the path-censored $\alpha$-stable process $Y$, that is to say the $\alpha$-stable process with the negative components of its path removed.
We show that $Y$ is a positive self-similar Markov process, and can
therefore be written as the exponential of a time-changed L\'evy process, say $\xi$. We show that the L\'evy process 
$\xi$ can be decomposed into the sum of a compound Poisson process and a so-called
Lamperti-stable process. Section 4 is dedicated to finding the distribution of the
jumps of this compound Poisson component, which we then use in Section 5 to
compute in  explicit detail the Wiener-Hopf factorisation of $\xi$. Finally,
we make use of the explicit nature of the Wiener-Hopf factorisation in
Section 6 to prove Theorems \ref{interval hitting} and \ref{potential int}.
There we also prove Theorem \ref{HP 0 before up} via
a connection with the process conditioned to stay positive.


\section{Lamperti transform and Lamperti-stable processes}\label{LS procs}

A \define{positive self-similar Markov process} (\define{pssMp}) with
\define{self-similarity index} $\alpha > 0$ is a standard Markov process
$Y = (Y_t)_{t\geq 0}$ with filtration $\GGt$ and probability laws
$(\stP_x)_{x > 0}$, on $[0,\infty)$, which has $0$ as an absorbing state and
which satisfies the \define{scaling property}, that for every $x, c > 0$,
\begin{equation}
  \label{scaling prop}%
  \text{ the law of } (cY_{t c^{-\alpha}})_{t \ge 0}
  \text{ under } \stP_x \text{ is } \stP_{cx} \text{.}
\end{equation}
Here, we mean ``standard'' in the sense of \cite{BG-mppt},
which is to say, $\GGt$ is a complete, right-continuous filtration,
and $Y$ has c\`adl\`ag paths and is strong Markov
and quasi-left-continuous.

In the seminal paper \cite{LampertiT}, Lamperti describes a one to one correspondence
between pssMps and L\'evy processes, which we now outline.
It may be worth noting that we have presented a slightly
different definition of pssMp from Lamperti; for the connection, see
\cite[\S 0]{VA-Ito}.

Let
$S(t) = \int_0^t (Y_u)^{-\alpha}\, \dd u .$
This process is continuous and strictly increasing until $Y$ reaches zero.
Let $(T(s))_{s \ge 0}$ be its inverse, and define
\[ \xi_s = \log Y_{T(s)} \qquad s\geq 0. 
\]
Then $\xi : = (\xi_s)_{s\geq 0}$ is a L\'evy process started at $\log x$, possibly killed at an independent
exponential time; the law of the L\'evy process and the rate of killing do not depend
on the value of $x$. The real-valued process $\xi$ with probability laws
$(\LevP_y)_{y \in \RR}$ is called the
\define{L\'evy process associated to $Y$}, or the \define{Lamperti transform of $Y$}.

An equivalent definition of $S$ and $T$, in terms of $\xi$ instead
of $Y$, is given by taking
$T(s) = \int_0^s \exp(\alpha \xi_u)\, \dd u$
and $S$ as its inverse. Then,
\begin{equation}
  \label{Lamp repr}
  Y_t = \exp(\xi_{S(t)}) 
\end{equation}
for all $t\geq 0$, and this shows that the Lamperti transform is a bijection.

Let $T_0 = \inf\{ t > 0: Y_t = 0 \}$ be the first hitting time of the absorbing state zero. Then the large-time behaviour of $\xi$ can be described by the behaviour of $Y$ at $T_0$, as follows:
\begin{enumerate}[(i)]
  \item If $T_0 = \infty$ a.s., then $\xi$ is unkilled and either oscillates or drifts to $+ \infty$.
  \item If $T_0 < \infty$ and $Y_{T_0 -} = 0$ a.s., then $\xi$ is unkilled and drifts to $-\infty$.
  \item If $T_0 < \infty$ and $Y_{T_0 -} > 0$ a.s., then $\xi$ is killed.
\end{enumerate}
It is proved in \cite{LampertiT} that the events mentioned above satisfy a zero-one law independently of $x$, and so the three possibilites above are an exhaustive classification of pssMps.

\medskip 

Three concrete examples of positive self-similar Markov processes related to $\alpha$-stable processes  
are treated in \citet{CC06}. We  present here the simplest case, namely that of the
$\alpha$-stable process absorbed at zero.
To this end, let $X$ be the $\alpha$-stable process
as defined in the introduction, and let
\[ \tau_0^- = \inf\{ t > 0 : X_t \le 0 \} . \]
\label{LSabs defined}%
Denote by $\LSabs$ the Lamperti transform of the pssMp $\stproca{X_t \Indic{t < \tau_0^-}}$.
Then $\LSabs$ has L\'evy density
\begin{equation}
\label{LSabs density}
  c_+ \frac{e^x}{(e^x-1)^{\alpha+1}} \Indic{x > 0}
  + c_- \frac{e^x}{(1-e^x)^{\alpha+1}} \Indic{x < 0} ,
\end{equation}
 and is killed at rate
$c_-/\alpha = \frac{\Gamma(\alpha)}{\Gamma(\alpha\rhohat)\Gamma(1-\alpha\rhohat)}$.

We note here that in \cite{CC06} the authors assume
that $X$ is symmetric when $\alpha = 1$, which motivates
the same assumption in this paper.

\section{The censored process and its Lamperti transform}
\label{s:cens}

We now describe the construction of the censored $\alpha$-stable process that will lie at the heart of our analysis, show that it is a pssMp and discuss its Lamperti transform. 

Henceforth, $X$, with probability laws $(\stP_x)_{x \in \RR}$, will denote the $\alpha$-stable process defined
in the introduction. 
Define the
occupation time of $(0,\infty)$,
\[ A_t = \int_0^t \Indic{X_s > 0} \, \dd s , \]
and let $\gamma(t) = \inf\{ s \ge 0 : A_s > t \}$ be its right-continuous inverse.
Define a process $(\Ych_t)_{ t \ge 0}$ by setting $\Ych_t = X_{\gamma(t)}$, $t\geq 0$. This is the process
formed by erasing the negative components of $X$ and joining up the gaps.

\medskip
\noindent
Write $\FFt$ for the augmented natural filtration of $X$, and
$\GG_t = \FF_{\gamma(t)}$, $t \ge 0$.

\begin{prop}\label{Ych scaling}
The process $\Ych$ is strong Markov with respect to the filtration $\GGt$ and satisfies the
scaling property with self-similarity index $\alpha$.
\begin{proof}
The strong Markov property follows directly from \citet[III.21]{RW1}. Establishing the scaling property is a straightforward exercise.
\end{proof}
\end{prop}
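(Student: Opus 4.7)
The plan is to handle the two assertions separately, citing a standard time-change result for the strong Markov property and carrying out the scaling calculation explicitly.

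For the strong Markov property, my plan is to appeal to the general theory of time changes of Markov processes (e.g.\ Rogers--Williams, Chapter III, §21). The occupation time $A_t = \int_0^t \Indic{X_s > 0}\,\dd s$ is a continuous additive functional of the strong Markov process $X$, and for a stable process that is not a pure drift, the Lebesgue measure of $\{s : X_s = 0\}$ is almost surely zero, so $A$ increases exactly on the set $\{s : X_s > 0\}$. The right-continuous inverse $\gamma(t) = \inf\{s \ge 0 : A_s > t\}$ is then a family of $\FFt$-stopping times, so the filtration $\GG_t = \FF_{\gamma(t)}$ is well-defined and right-continuous, and the time-changed process $\Ych_t = X_{\gamma(t)}$ inherits the strong Markov property with respect to $\GGt$. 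This is a direct invocation of the cited result and no substantial calculation is required.

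For the scaling property, my plan is to reduce to the scaling of $X$ by tracking how the censoring operation transforms under the spatial/temporal rescaling. Fix $c > 0$ and set $X'_t := c X_{t c^{-\alpha}}$. Under $\stP_x$ the process $X'$ has the same law as $X$ under $\stP_{cx}$. Since $\{X'_s > 0\} = \{X_{s c^{-\alpha}} > 0\}$, a change of variables gives
\[
  A'_t \;=\; \int_0^t \Indic{X'_s > 0}\,\dd s
  \;=\; c^\alpha \int_0^{t c^{-\alpha}} \Indic{X_u > 0}\,\dd u
  \;=\; c^\alpha A_{t c^{-\alpha}},
\]
so the right-continuous inverses satisfy $\gamma'(t) = c^\alpha \gamma(t c^{-\alpha})$, and consequently
\[
  \Ych'_t \;=\; X'_{\gamma'(t)} \;=\; c X_{\gamma'(t) c^{-\alpha}} \;=\; c \, \Ych_{t c^{-\alpha}}.
\]
Since $\Ych'$ is built from $X'$ by precisely the same censoring recipe, its law under $\stP_x$ coincides with that of $\Ych$ under $\stP_{cx}$, which is exactly \pref{scaling prop}.

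I do not expect any serious obstacle here: the only subtlety is that $A$ is constant on the negative excursions, which is why one must cite a time-change result that covers non-strictly-increasing additive functionals rather than trying to prove the Markov property from scratch. Everything else is bookkeeping with the stable scaling.
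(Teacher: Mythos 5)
Your proposal is correct and takes essentially the same approach as the paper: the same Rogers--Williams III.21 time-change result is cited for the strong Markov property, and your explicit change-of-variables computation $A'_t = c^\alpha A_{tc^{-\alpha}}$, $\gamma'(t) = c^\alpha \gamma(tc^{-\alpha})$, $\Ych'_t = c\,\Ych_{tc^{-\alpha}}$ is precisely the ``straightforward exercise'' the paper leaves to the reader.
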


\noindent
We now make zero into an absorbing state. Define the stopping time
\[ T_0 = \inf\{ t > 0 : \Ych_t = 0 \} \]
and the process
\[ Y_t = \Ych_t \Indic{t < T_0} , \for t \ge 0 , \]
so that $Y :=(Y_t)_{t\geq 0}$ is $\Ych$ absorbed at zero.
We call the process $Y$ with probability laws $(\stP_x)_{x > 0}$
the \define{path-censored $\alpha$-stable process}.

\begin{prop}
The process $Y$ is a pssMp with respect to the filtration $\GGt$.
\begin{proof}
The scaling property follows from Proposition \ref{Ych scaling},
and zero is evidently an absorbing state. It remains to show that
$Y$ is a standard process, and the only point which may be in doubt
here is quasi-left-continuity. This follows from the Feller
property, which in turn follows from scaling and the Feller
property of $X$.
\end{proof}
\end{prop}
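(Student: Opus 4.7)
The plan is to verify the three defining properties of a pssMp for $Y$ with respect to $\GGt$. That $0$ is an absorbing state is built into the definition $Y_t = \Ych_t \Indic{t < T_0}$. The $\alpha$-scaling property transfers from $\Ych$ (Proposition \ref{Ych scaling}) to $Y$: under $\stP_x$, the process $(c \Ych_{t c^{-\alpha}})_{t \ge 0}$ has the law of $\Ych$ under $\stP_{cx}$, and the first zero time of the rescaled process equals $c^\alpha T_0$, so the absorption operator commutes with the rescaling and one recovers \eqref{scaling prop} for $Y$.

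It then remains to check that $Y$ is standard. The filtration $\GGt$ is right-continuous and complete because $\FFt$ is and $\gamma$ is right-continuous; the càdlàg property of $Y$ is inherited from $\Ych$, since absorbing at the $\GGt$-stopping time $T_0$ preserves càdlàg paths; and the strong Markov property of $Y$ with respect to $\GGt$ follows from that of $\Ych$ combined with the standard fact that killing or absorbing a strong Markov process at the hitting time of a closed set yields a strong Markov process. All of these are routine consequences of Proposition \ref{Ych scaling} and the construction.

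The only genuinely subtle ingredient, and the principal obstacle, is quasi-left-continuity. I would deduce it from the Feller property of $Y$ in the usual way. To obtain Feller, I would first use scaling to write $\stE_x f(Y_t) = \stE_1 f(x Y_{t x^{-\alpha}})$ for $f \in C_0([0,\infty))$ and $x > 0$, so that continuity of the semigroup in the starting point on $(0,\infty)$ reduces to a statement about $Y$ under $\stP_1$; continuity of the time change $\gamma$ together with the Feller property of $X$ then pushes this through. The continuity at $x=0$ needs a little more care and uses that $0$ is absorbing for $Y$, combined with the scaling identity which sends $t x^{-\alpha} \to \infty$ as $x \downarrow 0$. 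Once the Feller property is established, quasi-left-continuity is immediate from the classical result that a Feller process is quasi-left-continuous with respect to its augmented natural filtration, completing the verification.
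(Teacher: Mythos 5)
Your proposal is correct and follows essentially the same route as the paper's proof: absorption at zero is immediate from the definition, scaling passes from $\Ych$ to $Y$ via Proposition \ref{Ych scaling}, and the only non-routine point, quasi-left-continuity, is obtained from the Feller property of $Y$, which in turn is deduced from scaling together with the Feller property of $X$. You merely spell out more of the routine verifications (filtration, càdlàg paths, strong Markov under absorption) and sketch the Feller argument in somewhat more detail, but the substance and the key reduction are identical to the paper's.
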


\medskip
\begin{rem}\label{r:other pssmps}
The definition of $Y$ via time-change and stopping at zero
bears some resemblance to
a number of other constructions:
\begin{enumerate}[(a)]
\item Bertoin's
construction \cite[\S 3.1]{Ber-split} of the L\'evy process conditioned
to stay positive. The key difference here is that, when a negative excursion is
encountered, instead of simply erasing it, \cite{Ber-split} patches the last jump
from negative to positive onto the final value of the previous positive excursion.
\item Bogdan, Burdzy and Chen's ``censored stable process''
for the domain $D = (0,\infty)$;
see \cite{BBC-cens}, in particular Theorem 2.1 and the preceding discussion.
Here the authors suppress any jumps of a symmetric $\alpha$-stable process
$X$ by which the process
attempts to escape the domain, and kill the process if it reaches
the boundary continuously.
\end{enumerate}
Both processes (a) and (b) are also pssMps with index $\alpha$.
%
These processes, together with the process $Y$ just described, therefore
represent three choices of how to restart an $\alpha$-stable process
in a self-similar way after it leaves
the positive half-line. We illustrate this in Figure \ref{fig:cens-diag}.

\end{rem}

\begin{figure}[tbp]
	\centering
		\includegraphics{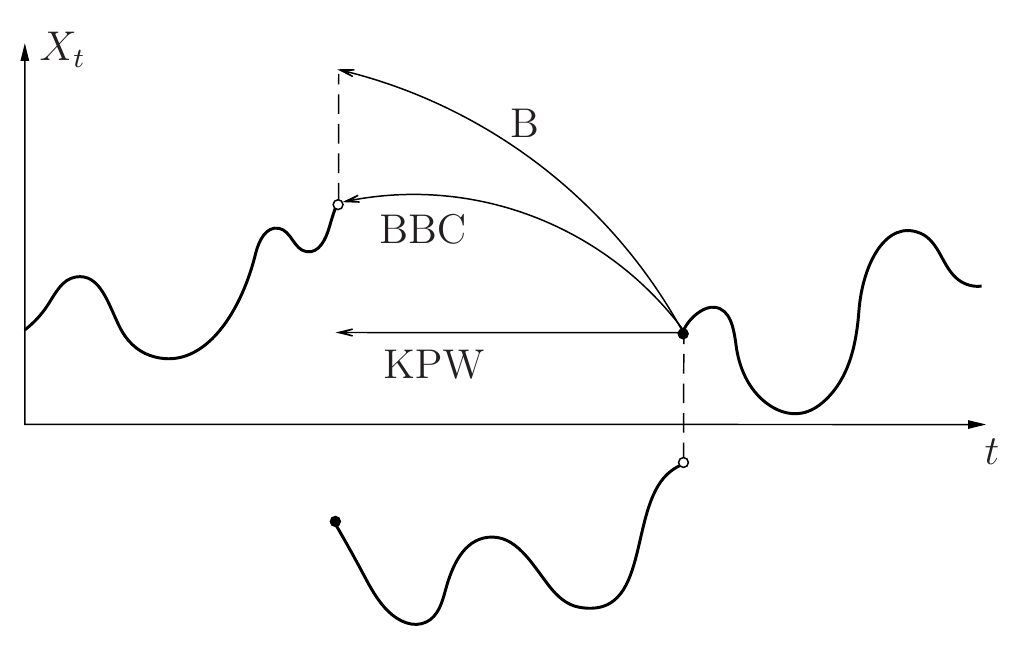}
	\caption{The construction of
	  three related processes from $X$, the stable process:
	  `B' is the stable process conditioned to stay positive \cite{Ber-split};
	  `BBC' is the censored stable process \cite{BBC-cens};
	  and `KPW' is the process $Y$ in this work.}
  \label{fig:cens-diag}
\end{figure}

\medskip\noindent
We now consider the pssMp $Y$ more closely for different
values of $\alpha\in(0,2)$. Taking account of
\citet[Proposition VIII.8]{BertoinLP} and the discussion immediately
above it we know that for $\alpha \in (0, 1]$, points are polar for $X$.
That is, $T_0 = \infty$ a.s.,
and so in this case $Y = \Ych$. Meanwhile, for $\alpha \in (1,2)$,
every point is recurrent,
so $T_0 < \infty$ a.s.. However, the process $X$ makes infinitely many jumps across zero before hitting it. Therefore, in this case $Y$ approaches zero continuously. In fact, it can be
shown that, in this case, $\Ych$ is the \define{recurrent extension} of $Y$ in the spirit of
\cite{RiveroRE1} and \cite{Fitz}.

Now, let $\xi  =(\xi_s)_{s\geq 0}$ be the Lamperti transform of $Y$. That is,
\begin{equation}
  \xi_s = \log Y_{T(s)} , \for s \ge 0, \label{e:LT of Y}
\end{equation} 
where $T$ is a time-change.
As in Section 2, we will write $\LevP_y$ for the law of $\xi$ started at
$y \in \RR$; note that $\LevP_y$ corresponds to $\stP_{\exp(y)}$.
The space transformation \eqref{e:LT of Y}, together with the above comments and,
for instance, the remark on p.~34 of \cite{BertoinLP}, allows us to make the following
distinction based on the value of $\alpha$.
\begin{enumerate}[(i)]
  \item If $\alpha \in (0,1)$, $T_0 = \infty$ and $X$ (and hence $Y$) is transient a.s..
    Therefore, $\xi$ is unkilled and drifts to $+ \infty$.
  \item If $\alpha = 1$, $T_0 = \infty$ and every neighbourhood of zero is an a.s.\ recurrent
    set for $X$, and hence also for $Y$.
    Therefore, $\xi$ is unkilled and oscillates.
  \item If $\alpha \in (1,2)$, $T_0 < \infty$ and $Y$ hits zero continuously.
    Therefore, $\xi$ is unkilled and drifts to $- \infty$.
\end{enumerate}

\noindent
Furthermore, we have the following result.

\begin{prop}
\label{xi repr}%
The L\'evy process $\xi$ is the sum of two independent L\'evy processes
$\xiLS$ and $\xiCPP$, which are characterised as follows:
\begin{enumerate}[(i)]
  \item\label{xi repr 1}%
    The L\'evy process $\xiLS$ has characteristic exponent
    \[ \Psi^*(\theta) - c_-/\alpha, \for \theta \in \RR , \]
    where $\Psi^*$ is the characteristic exponent of the process $\xi^*$ defined in
    Section \ref{LSabs defined}. That is, $\xiLS$ is formed by removing the independent
    killing from $\xi^*$.
    \item The process $\xiCPP$ is a compound Poisson process whose jumps occur
    at rate $c_-/\alpha$.
\end{enumerate}
\end{prop}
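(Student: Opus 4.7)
My strategy is to build $\xi$ pathwise as a concatenation of i.i.d.\ copies of $\xi^*$ glued together by i.i.d.\ jumps, and then read off the claimed decomposition directly.

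The starting observation is that $Y$ is obtained from $X$ by excising every excursion of $X$ into $(-\infty,0)$ and closing the resulting gap. Consequently, on each interval between two consecutive censoring events, $Y$ is a time-change of $X$ run up to $\tau_0^-$; applying the Lamperti transform, the corresponding segment of $\xi$ is a copy of $\xi^*$ run until its killing time, which by the description recalled in Section \ref{LS procs} is exponential with rate $c_-/\alpha$.

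Next I would analyse a single censoring event. At the end of a positive excursion of $X$, the process jumps from some $u>0$ to a negative value, and later re-enters $(0,\infty)$ at time $g := \inf\{t > \tau_0^- : X_t > 0\}$ at a level $X_g > 0$; for $\alpha \in (0,1]$ this uses the polarity of points, while for $\alpha \in (1,2)$ it uses the fact that $X$ crosses zero infinitely often before $T_0$, as noted in the discussion preceding Proposition \ref{xi repr}. The censoring manifests in $\xi$ as a single jump of size $\log(X_g/u)$; by the strong Markov property of $X$ applied at $\tau_0^-$ together with self-similarity, the law of $X_g/u$ depends on neither $u$ nor the past of $X$, so the censoring jumps are i.i.d.\ and independent of the preceding segments of $\xi$.

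Assembling the pieces, let $\xiCPP$ be the pure-jump process that is constant between censoring events and accumulates the censoring jumps. By the previous paragraphs $\xiCPP$ has i.i.d.\ exponential inter-jump times of rate $c_-/\alpha$ and i.i.d.\ jumps independent of these times, hence is a compound Poisson process of rate $c_-/\alpha$. The residual $\xiLS := \xi - \xiCPP$ is then the concatenation of i.i.d.\ copies of $\xi^*$ with their killings removed, which produces a L\'evy process of characteristic exponent $\Psi^*(\theta) - c_-/\alpha$; independence of $\xiLS$ and $\xiCPP$ follows because, at each killing instant of a $\xi^*$-piece, the corresponding censoring jump is independent of the past by construction. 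The main technical obstacle is justifying rigorously that this excursion-wise recipe produces a bona fide L\'evy process with stationary and independent increments, rather than merely a process with the correct one-dimensional marginals; this should reduce to the strong Markov property of $X$ (transported through the Lamperti time-change) together with the memorylessness of the exponential killing time of $\xi^*$.
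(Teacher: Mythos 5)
Your overall architecture --- decompose $\xi$ into a concatenation of i.i.d.\ $\xi^*$-segments glued together by the censoring jumps, with the rate $c_-/\alpha$ read off from the exponential killing time of $\xi^*$ --- is essentially the paper's, modulo presentation: the paper reduces to a single cycle by applying the strong Markov property at $\tau := \inf\{t>0 : X_t < 0\}$, rather than constructing all cycles at once and then worrying afterwards whether the result has stationary independent increments.

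There is, however, a genuine gap in your independence step, namely the assertion that ``by the strong Markov property of $X$ applied at $\tau_0^-$ together with self-similarity, the law of $X_g/u$ depends on neither $u$ nor the past of $X$.'' In the paper's notation $u = X_{\tau-}$ and $X_g = X_\sigma$, where $\sigma = \inf\{t > \tau : X_t > 0\}$. Strong Markov at $\tau$ only tells you that, conditionally on $\FF_\tau$, the return level $X_\sigma$ equals $(-X_\tau)$ times an independent copy of the corresponding return level for a stable process started at $-1$. Hence $X_\sigma/X_{\tau-}$, conditionally on $\FF_\tau$, is the $\FF_\tau$-measurable random factor $-X_\tau/X_{\tau-}$ multiplied by that independent copy; neither the law of this factor being unaffected by $u$ nor its independence from $(X_t : t < \tau)$ follows from the strong Markov property at $\tau$, so the claimed independence of the censoring jump from the pre-censoring path is unsupported as written. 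The paper fills exactly this hole with Lemma~\ref{Xtau law}: the joint law of $(X_\tau, X_{\tau-}, X_\sigma)$ under $\stP_x$ is the $\stP_1$-law scaled by $x$, whence $\stE_x\bigl[g(X_\sigma/X_{\tau-})\bigr]$ does not depend on $x > 0$; combining this with the ordinary Markov property at a deterministic time $t$ on the event $\{t<\tau\}$ then yields independence of the ratio $X_\sigma/X_{\tau-}$ from the pre-$\tau$ path. With that lemma (or an equivalent) added, your plan goes through.
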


\noindent
Before beginning the proof, let us make some preparatory remarks.
Let
\[
  \tau = \inf\{ t > 0 : X_t < 0 \}
  \quad \text{and} \quad
  \sigma = \inf\{ t > \tau : X_t > 0 \}
\]
be hitting and return times of $(-\infty,0)$ and $(0,\infty)$ for $X$.
Note that, due to the time-change $\gamma$, $Y_\tau = X_\sigma$,
while $Y_{\tau -} = X_{\tau -}$. We require the following lemma.


\begin{lem} \label{Xtau law}%
The joint law of $(X_\tau,X_\taull,X_\sigma)$ under $\stP_x$ is equal to that
of $(x X_\tau, x X_\taull, x X_\sigma)$ under $\stP_1$.
\begin{proof}
This can be shown in a straightforward way using the scaling property.
\end{proof}
\end{lem}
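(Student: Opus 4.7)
The plan is to reduce the statement to the scaling property of the stable process, which is already given. Fix $x > 0$. Under $\stP_1$, define the rescaled process $\tilde X_t = x X_{t x^{-\alpha}}$. By scaling, $\tilde X$ under $\stP_1$ is again a stable process with the same characteristics; since $\tilde X_0 = x$, it has the same law as $X$ under $\stP_x$.

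Next I express the hitting times $\tau$ and $\sigma$ evaluated on $\tilde X$ in terms of those evaluated on $X$. Writing $\tilde\tau = \inf\{t > 0 : \tilde X_t < 0\}$ and using that $x > 0$, we have $\tilde X_t < 0 \iff X_{t x^{-\alpha}} < 0$, so the substitution $s = t x^{-\alpha}$ yields $\tilde\tau = x^\alpha \tau$. The same argument, applied on the time interval $(\tilde\tau,\infty)$, gives $\tilde\sigma = x^\alpha \sigma$. Evaluating $\tilde X$ at these times then produces
\[
\tilde X_{\tilde\tau} = x X_\tau,
\qquad
\tilde X_{\tilde\tau -} = x X_{\tau -},
\qquad
\tilde X_{\tilde\sigma} = x X_\sigma.
\]

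Combining the two displays, the triple $(X_\tau, X_{\tau -}, X_\sigma)$ under $\stP_x$ agrees in law with $(\tilde X_{\tilde\tau}, \tilde X_{\tilde\tau -}, \tilde X_{\tilde\sigma})$ under $\stP_1$, which equals $(x X_\tau, x X_{\tau -}, x X_\sigma)$ under $\stP_1$, as claimed. There is no real obstacle: the only point requiring a little care is that $\tilde\tau$ is defined as a first-passage time below zero, and this must be shown to correspond exactly to the analogous time for $X$; positivity of $x$ ensures the sign condition is preserved by multiplication, so no subtlety arises there. The joint nature of the statement follows because all three random variables are computed from the single rescaled sample path $\tilde X$.
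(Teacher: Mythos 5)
Your argument is correct and is exactly the scaling computation the paper gestures at ("shown in a straightforward way using the scaling property"): rescale $\tilde X_t = xX_{tx^{-\alpha}}$ under $\stP_1$, observe $\tilde\tau = x^\alpha\tau$ and $\tilde\sigma = x^\alpha\sigma$ because $x>0$ preserves the sign conditions, and read off the triple. Nothing to add.
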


\medskip
\begin{proof}[Proof of Proposition \ref{xi repr}]
First we note that, applying the strong Markov property to the
$\GGt$-stopping time $\tau$, it is sufficient
to study the process $(Y_t)_{t \le \tau}$.

It is clear that the path section $(Y_t)_{ t < \tau}$ agrees with
$(X_t)_{ t < \tau_0^-}$;
however, rather than being killed at time $\tau$, the
process $Y$ jumps to a positive state. Recall now that the effect of the Lamperti
transform on the time $\tau$ is to turn it into an
exponential time of rate $c_-/\alpha$ which is independent of $(\xi_s)_{s < S(\tau)}$.
This immediately yields the decomposition
of $\xi$ into the sum of
$\xiLS: = (\xiLS_s)_{s\geq 0}$ and $\xiCPP: = (\xiCPP_s)_{s\geq 0}$, 
where 
$\xiCPP$ is a process which jumps at the times of a Poisson
process with rate $c_-/\alpha$,
but whose jumps may depend on the position of $\xi$ prior to this jump.
What remains is to be  shown is that the values of the
jumps of $\xiCPP$ are also independent of $\xiLS$.

By the remark at the beginning of the proof, it is sufficient to show that the
first jump of $\xiCPP$ is independent of the previous path
of $\xiLS$.
Now, using only the independence of the jump times of $\xiLS$ and $\xiCPP$, we can
compute
\begin{eqnarr*}
  \jump Y_{\tau} := Y_{\tau} - Y_{\tau -}
  &=& \exp(\xiLS_{S(\tau)} + \xiCPP_{S(\tau)})
    - \exp(\xiLS_{S(\tau)-} + \xiCPP_{S(\tau) -}) \\
  &=& \exp(\xi_{S(\tau)-}) \bigl[ \exp(\jump \xiCPP_{S(\tau)}) - 1 \bigr] \\
  &=& X_{\taull} \bigl[ \exp(\jump \xiCPP_{S(\tau)}) - 1 \bigr] ,
\end{eqnarr*}
where $S$ is the Lamperti time change for $Y$, and
$\jump \xiCPP_s = \xiCPP_s - \xiCPP_{s-}$. Now,
\[
  \exp(\jump \xiCPP_{S(\tau)})= 1 + \frac{\jump Y_{\tau}}{X_{\taull}} = 1 + \frac{X_\sigma - X_{\taull}}{X_\taull} = \frac{X_\sigma}{X_\taull}.
\]
Hence, it is sufficient to show that $\frac{X_\sigma}{X_\taull}$ is independent of
$(X_t, t < \tau)$. The proof of this is essentially the same as that of part (iii) in Theorem 4 from \citet{CPR}, which we reproduce here for clarity.

First, observe that one consequence of Lemma \ref{Xtau law} is that,
for $g$ a Borel function and $x > 0$,
\[ \stE_x \biggl[ g\biggl( \frac{X_\sigma}{X_\taull} \biggr)\biggr]
  = \stE_1 \biggl[ g\biggl( \frac{X_\sigma}{X_\taull} \biggr) \biggr] . \]
Now, fix $n \in \NN$, $f$ and $g$ Borel functions and $s_1 < s_2 < \dotsb < s_n = t$.
Then, using the Markov property and the above equality,
\begin{eqnarr*}
  \stE_1 \biggl[ f(X_{s_1}, \dotsc, X_t)
    g\biggl( \frac{X_\sigma}{X_\taull} \biggr)
    \Indic{t < \tau} \biggr]
  &=& \stE_1 \biggl[
    f(X_{s_1}, \dotsc, X_t)
    \Indic{t < \tau}
    \stE_{X_t} \biggl[ g\biggl( \frac{X_\sigma}{X_\taull} \biggr) \biggr]
    \biggr] \\
  &=& \stE_1 \biggl[
    f(X_{s_1}, \dotsc, X_t)
    \Indic{t < \tau} \biggr]
    \stE_{1} \biggl[ g\biggl( \frac{X_\sigma}{X_\taull} \biggr) \biggr].
\end{eqnarr*}
We have now shown that $\xiLS$ and $\xiCPP$ are independent, and this completes the
proof.
\end{proof}

\begin{rem}
Let us consider the effect of the Lamperti transform on each of the
pssMps in Remark \ref{r:other pssmps}. For the process conditioned to stay
positive, the associated L\'evy process is
the process $\xiup$ defined in \citet{CC06}.
As regards the censored stable
process in $(0,\infty)$,
we can reason as in the above proposition to deduce that its Lamperti
transform is simply the process $\xiLS$ which we have just defined.
\end{rem}

\section{Jump distribution of the compound Poisson component}

In this section, we express the jump distribution of $\xiCPP$ in terms of known
quantitites, and hence derive its characteristic function and density.


Before stating a necessary lemma, we establish some more notation.
Let $\hat X$ be an independent copy of the dual process $-X$ and denote its probability laws by $(\stPhat_x)_{x \in \RR}$,
and let
\[ \hat\tau = \inf\{ t > 0 : \hat X_t < 0\} . \]
Furthermore, we shall denote by $\jump \xiCPP$ the random variable  whose law is the same as
the jump distribution of $\xiCPP$.

\medskip
\begin{lem}\label{jump decomp}
The random variable
$\exp(\jump \xiCPP)$ is equal in distribution to
\[  \biggl( - \frac{X_\tau}{X_{\taull}} \biggr)
  \Bigl( - \hat X_{\hat\tau} \Bigr)  , \]
where $X$ and $\hat X$ are taken to be independent with respective laws
$\stP_1$ and $\stPhat_1$.
\begin{proof}
In the proof of Proposition \ref{xi repr}, we saw that
\begin{equation}
  \exp(\jump \xiCPP_{S(\tau)})
  = \frac{X_\sigma}{X_\taull} . \label{decomp expr1}
\end{equation}
Applying the Markov property, and then using Lemma \ref{Xtau law} with the
$\alpha$-self-similar process $\hat X$, we obtain
\begin{eqnarr*}
  \stP_1(X_\sigma \in \cdot \vert \FF_\tau)
  &=& \stPhat_{-y}(-{\hat X}_{\hat \tau} \in \cdot)\big\vert_{y = X_\tau} \\
  &=& \stPhat_1(y{\hat X}_{\hat \tau} \in \cdot)\big\vert_{y = X_\tau}.
\end{eqnarr*}
Then, by disintegration,
\begin{eqnarr*}
  \stE_{1}\biggl[f\biggl(\frac{X_\sigma}{X_\taull}\biggr)\biggr]
  = \stE_{1}\biggl[
  \stE_{1}\biggl[
  f\biggl(\frac{X_\sigma}{X_\taull}\biggr)
  \bigg\vert\FF_\tau \biggr]
  \biggr]
  &=& \stE_1 \biggl[
    \int f\biggl(\frac{x}{X_\taull}\biggr)
    \stPcsx{1}{X_\sigma \in \dd x}{\FF_\tau}
    \biggr] \\
  &=& \stE_1 \biggl[
    \int f\biggl(\frac{x}{X_\taull}\biggr)
    \stPhat_1
    \bigl[y \hat X_{\hat\tau} \in \dd x \bigr]\big\vert_{y = X_{\tau}}
    \biggr] \\
  &=& \stE_1\biggl[
    \stEhat_1\biggl[ f\biggl(\frac{y\hat X_{\hat \tau}}{z}\biggr)
    \biggr]\bigg\vert_{y = X_{\tau}, \, z = X_{\taull}} \biggr] \\
  &=& \stE_1 \otimes \stEhat_1
  \biggl[ f\biggl(\frac{X_\tau \hat X_{\hat\tau}}{X_\taull}\biggr)
  \biggr] .
\end{eqnarr*}
Combining this with \eqref{decomp expr1}, we obtain that the law under $\stP_1$
of $\exp\bigl(\jump \xiCPP_{S(\tau)}\bigr)$ is equal to
that of $\dfrac{X_\tau \hat X_{\hat \tau}}{X_\taull}$ under $\stP_1 \otimes \stPhat_1$,
which establishes the claim.
\end{proof}
\end{lem}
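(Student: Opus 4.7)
My plan is as follows. Proposition \ref{xi repr} already identifies
\[ \exp(\jump \xiCPP_{S(\tau)}) = \frac{X_\sigma}{X_\taull} \quad \stP_1\text{-a.s.,} \]
so the task reduces to showing that the law of $X_\sigma/X_\taull$ under $\stP_1$ coincides with that of $(-X_\tau)(-\hat X_{\hat\tau})/X_\taull$ under $\stP_1 \otimes \stPhat_1$. The two ingredients are the strong Markov property of $X$ at $\tau$, and a duality-plus-scaling relation between $X$ and $\hat X$.

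First, I would condition on $\FF_\tau$. Both $X_\tau$ and $X_\taull$ are $\FF_\tau$-measurable, and by the strong Markov property the regular conditional law of $X_\sigma$ given $\FF_\tau$, on $\{X_\tau = y\}$ with $y<0$, is the law under $\stP_y$ of the first value of $X$ in $(0,\infty)$. Since $-X$ has law $\stPhat$, this first positive value coincides in distribution with $-\hat X_{\hat\tau}$ under $\stPhat_{-y}$. Applying the scaling property of $\hat X$ (the $\hat X$-analogue of Lemma \ref{Xtau law}) then allows me to reduce the starting point $-y$ to $1$: $\hat X_{\hat\tau}$ under $\stPhat_{-y}$ has the same law as $(-y)\hat X_{\hat\tau}$ under $\stPhat_1$.

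Combining the two steps, the conditional law of $X_\sigma$ given $\FF_\tau$ is that of $(-X_\tau)(-\hat X_{\hat\tau})$ with $\hat X$ sampled independently from $\stPhat_1$. Dividing by the $\FF_\tau$-measurable factor $X_\taull$ and integrating out $\FF_\tau$ via Fubini yields the stated identity in distribution. The only genuinely delicate step is producing a regular version of the conditional law of $X_\sigma$ that depends measurably on $X_\tau$, so that the composition with scaling makes sense; this follows from the Feller property of $X$, and once it is in place the rest of the argument is essentially bookkeeping between the laws of $X$, $\hat X$ and the conditioning.
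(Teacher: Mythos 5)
Your proposal follows the paper's own argument essentially step for step: start from the identity $\exp(\jump\xiCPP_{S(\tau)}) = X_\sigma/X_\taull$, apply the strong Markov property at $\tau$ and duality to identify the conditional law of $X_\sigma$ as that of $-\hat X_{\hat\tau}$ under $\stPhat_{-X_\tau}$, use the scaling statement of Lemma \ref{Xtau law} for $\hat X$ to shift the starting point to $1$, and disintegrate against $\FF_\tau$. This is the same route the paper takes, and the argument is correct.
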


\label{s:jump cf}
\noindent
The characteristic function of $\jump \xiCPP$ can now be found by rewriting the expression
in Proposition \ref{jump decomp} in terms of overshoots and undershoots of stable L\'evy
processes, whose marginal and joint laws are given in \citet{Rogozin72} and
\citet{DoKyp06}. Following the notation of \cite{DoKyp06}, let
\[ \tau_a^+ = \inf\{ t > 0 : X_t > a \} , \]
and let $\hat\tau_a^+$ be defined similarly for $\hat X$.

\bigskip
\begin{prop}
The characteristic function of the jump distribution of $\xiCPP$ is
given by
\begin{equation}
\label{jump cf}%
  \LevE_0 \bigl[ \exp\bigl(\iu\theta \jump \xiCPP\bigr) \bigr]
  = \frac{\sin(\pi \alpha \rho)}{\pi \Gamma(\alpha)}
    \Gamma(1-\alpha\rho + \iu\theta) \Gamma(\alpha\rho - \iu\theta)
    \Gamma(1 + \iu\theta) \Gamma(\alpha - \iu\theta).
\end{equation}

\begin{proof}
In the course of the coming computations, we will make use several times of the
beta integral,
\[ \int_0^1 s^{x-1} (1-s)^{y-1} \, \dd s
  = \int_0^\infty \frac{t^{x-1}}{(1+t)^{x+y}}\, \dd t
  = \frac{\Gamma(x) \Gamma(y)}{\Gamma(x+y)} \text{,} \for \Re x, \, \Re y > 0.
   \]
See for example \cite[formulas \fromto{8.830.1}{3}]{GR}.


Now, for $\theta \in \RR$,
\begin{equation}\label{jd 1}
\begin{split}
  \stEhat_1\biggl(-\hat X_{\hat\tau}\biggr)^{\iu\theta}&= \stE_0 \biggl( X_{\tau_1^+} - 1 \biggr)^{\iu\theta}  = \frac{\sin(\pi \alpha \rho)}{\pi}
    \int_0^\infty t^{\iu\theta - \alpha \rho} (1+t)^{-1}\, \dd t \\
    &= \frac{\sin(\pi \alpha \rho)}{\pi}
    \Gamma(1-\alpha \rho + \iu\theta) \Gamma(\alpha \rho - \iu\theta).
    \end{split}
\end{equation}
Furthermore,
\begin{equation}    \label{iteration}
\begin{split}
  \stE_1 &\biggl( \frac{X_\tau}{X_\taull} \biggr)^{\iu\theta}
  = \stEhat_0 \biggl(
      \frac{\hat X_{\hat\tau_1^+} - 1}{1 - \hat X_{\hat\tau_1^+ -}}
    \biggr)^{\iu\theta} \\
  &= \frac{\sin(\pi\alpha\rhohat)}{\pi}
  \frac{\Gamma(\alpha+1)}{\Gamma(\alpha\rho)\Gamma(\alpha\rhohat)}  \int_0^1 \int_y^\infty \int_0^\infty
    \frac{ u^{\iu\theta} (1-y)^{\alpha \hat{\rho} -1} (v-y)^{\alpha(1-\hat{\rho})-1} }
      { v^{\iu\theta} (v+u)^{1+\alpha} }
    \, \dd u \, \dd v \, \dd y.
\end{split}
\end{equation}
For the innermost integral above we have
\[
\int_0^\infty \frac{u^{\iu\theta}}{(u+v)^{1+\alpha}}\, \dd u
  \overset{w=u/v}{=}
  v^{\iu\theta - \alpha} \int_0^\infty \frac{w^{\iu\theta}}{(1+w)^{1+\alpha}}\, \dd w 
 =
v^{\iu\theta - \alpha} \frac{\Gamma(\iu\theta + 1)\Gamma(\alpha - \iu\theta)}{\Gamma(\alpha+1)} .
\]
The next iterated integral in \eqref{iteration} becomes
\[
\int_y^\infty v^{-\alpha} (v-y)^{\alpha(1-\rhohat) - 1}\, \dd v
  \overset{z=v/y-1}{=}
  y^{-\alpha\rhohat} \int_0^\infty \frac{z^{\alpha(1-\rhohat)-1}}{(1+z)^\alpha} \, \dd z 
  =
   y^{-\alpha\rhohat} \frac{\Gamma(\alpha\rho)\Gamma(\alpha\rhohat)}{\Gamma(\alpha)} ,
\]
and finally it remains to calculate
\[
  \int_0^1 y^{-\alpha\rhohat}(1-y)^{\alpha\rhohat-1} \, \dd y = \Gamma(1-\alpha\rhohat)\Gamma(\alpha\rhohat) . 
\]
Multiplying together these expressions 
and using the reflection identity
$\Gamma(x)\Gamma(1-x) = \pi/\sin(\pi x)$, we obtain
\begin{equation} 
  \stE_1 \biggl( - \frac{X_{\tau}}{X_{\tau \! -}} \biggr)^{\iu\theta}
    = \frac{\Gamma(\iu\theta + 1)\Gamma(\alpha- \iu\theta)}{\Gamma(\alpha)} .
  \label{jd 2}
\end{equation}

The result now follows from Lemma \ref{jump decomp} by multiplying \eqref{jd 1} and \eqref{jd 2} together.
\end{proof}
\end{prop}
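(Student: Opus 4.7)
The plan is to take the factorisation from Lemma \ref{jump decomp}, namely $\exp(\jump \xiCPP) \eqd \bigl(-X_\tau/X_{\taull}\bigr) \cdot \bigl(-\hat X_{\hat\tau}\bigr)$ with the two factors independent, and apply it directly to compute the characteristic function $\LevE_0[\exp(\iu\theta \jump \xiCPP)]$. Since the left-hand side equals $\stE[\bigl(\exp(\jump \xiCPP)\bigr)^{\iu\theta}]$, independence reduces the task to computing the two Mellin-type transforms
\[
  M_1(\theta) = \stEhat_1\bigl(-\hat X_{\hat\tau}\bigr)^{\iu\theta},
  \qquad
  M_2(\theta) = \stE_1\bigl(X_\tau/X_{\taull}\bigr)^{\iu\theta},
\]
separately and multiplying them.

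For $M_1$, I would use scaling/duality to rewrite $\stEhat_1(-\hat X_{\hat\tau})^{\iu\theta} = \stE_0(X_{\tau_1^+}-1)^{\iu\theta}$ and then insert Rogozin's explicit formula for the overshoot distribution of a stable process across a level (the one-sided undershoot-free version of \eqref{Rog-first} with $x=0$, $y\mapsto 1+t$). This reduces $M_1$ to a single beta integral, which evaluates via the identity $\int_0^\infty t^{x-1}(1+t)^{-x-y}\,\dd t = \Gamma(x)\Gamma(y)/\Gamma(x+y)$ to give the pair $\Gamma(1-\alpha\rho+\iu\theta)\Gamma(\alpha\rho-\iu\theta)\cdot \sin(\pi\alpha\rho)/\pi$.

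For $M_2$, I would apply duality (replacing $X$ by $-\hat X$ and shifting so that the first passage is above level $1$) so that $X_\tau/X_{\taull}$ becomes the ratio $(\hat X_{\hat\tau_1^+}-1)/(1-\hat X_{\hat\tau_1^+ -})$ involving the joint overshoot and undershoot at first passage. Then I would substitute the explicit Doney--Kyprianou joint overshoot/undershoot density, which produces a triple integral over the undershoot variable $y\in(0,1)$, overshoot variable $v$, and the dummy ratio variable $u$. The main obstacle is evaluating this triple integral without error: I would do it iteratively, using a substitution $w = u/v$ in the innermost $u$-integral to get a factor $v^{\iu\theta-\alpha}\Gamma(1+\iu\theta)\Gamma(\alpha-\iu\theta)/\Gamma(\alpha+1)$, then $z = v/y - 1$ in the $v$-integral to get $y^{-\alpha\rhohat}\Gamma(\alpha\rho)\Gamma(\alpha\rhohat)/\Gamma(\alpha)$, and finally a standard beta integral in $y$ to get $\Gamma(1-\alpha\rhohat)\Gamma(\alpha\rhohat)$. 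Collecting these factors together with the Doney--Kyprianou prefactor $\tfrac{\sin(\pi\alpha\rhohat)}{\pi}\tfrac{\Gamma(\alpha+1)}{\Gamma(\alpha\rho)\Gamma(\alpha\rhohat)}$ and applying the reflection identity $\Gamma(\alpha\rhohat)\Gamma(1-\alpha\rhohat) = \pi/\sin(\pi\alpha\rhohat)$ will collapse everything to $\Gamma(1+\iu\theta)\Gamma(\alpha-\iu\theta)/\Gamma(\alpha)$.

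Multiplying $M_1(\theta) \cdot M_2(\theta)$ then yields the claimed product of four gamma functions with the prefactor $\sin(\pi\alpha\rho)/(\pi\Gamma(\alpha))$. The only genuinely delicate step is the triple-integral evaluation for $M_2$; everything else is bookkeeping with beta integrals and the reflection formula. I would also remark that analyticity considerations justify reading the resulting formula as a characteristic function (all gamma arguments have real parts in suitable strips when $\theta\in\RR$, since $\alpha\rho, 1-\alpha\rho, \alpha \in (0,\infty)$ under the standing parameter restriction $(\alpha,\rho) \in \stparamset$).
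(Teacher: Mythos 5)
Your plan reproduces the paper's proof step for step: the same reduction via Lemma \ref{jump decomp} and independence, the same rewriting of $M_1$ through Rogozin's overshoot law and a beta integral, and the same evaluation of $M_2$ via the Doney--Kyprianou joint overshoot/undershoot density using precisely the substitutions $w=u/v$ and $z=v/y-1$ followed by the reflection identity. This is not a different route but an accurate reconstruction of the paper's own argument, so there is nothing further to compare.
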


\begin{rem}
The recent work of \citet{CPR} on the so-called Lamperti-Kiu processes can be applied to give
the same result. The quantity $\jump \xiCPP$ in the present work corresponds to the
independent sum $\xi^-_\zeta + U^+ + U^-$ in that paper, where $U^+$ and $U^-$ are
``log-Pareto'' random variables and $\xi^-$ is the Lamperti-stable process corresponding
to $\hat X$ absorbed below zero; see \cite[Corollary 11]{CPR}
for details. It is straightforward to show that the characteristic function of this sum is
equal to the right-hand side of \eqref{jump cf}.
\end{rem}

\noindent
It is now possible to deduce the density of the jump distribution from its
characteristic function.
By substituting on the left and using the beta integral, it can be shown that
\begin{eqnarr*}
  \int_{-\infty}^{\infty} e^{\iu \theta x}\,
    \alpha e^x (1+e^x)^{-(\alpha + 1)}\, \dd x
  &=& \frac{\Gamma(1+\iu\theta) \Gamma(\alpha - \iu\theta)}{\Gamma(\alpha)} , \\
  \int_{-\infty}^{\infty} e^{\iu \theta x}\,
    \frac{\sin(\pi\alpha\rho)}{\pi} e^{(1-\alpha\rho)x} (1+e^x)^{-1}\, \dd x
  &=& \frac{\sin(\pi\alpha\rho)}{\pi}
    \Gamma(\alpha\rho-\iu\theta) \Gamma(1-\alpha\rho+\iu\theta) ,
\end{eqnarr*}
and so the density of $\jump \xiCPP$ can be seen as the convolution of these two
functions. Moreover, it is even possible to calculate this convolution directly:
\begin{eqnarr}
\eqnarrLHSnn{
   \LevP_0\bigl(\jump\xiCPP \in \dd x\bigr)/\dd x
  = \frac{\alpha}{\Gamma(\alpha\rho)\Gamma(1-\alpha\rho)}
    \int_{-\infty}^{\infty} e^u
    (1+e^u)^{-(\alpha+1)}
    e^{(1-\alpha\rho)(x-u)}
    (1+e^{x-u})^{-1} \, \dd u}
  &=& \frac{\alpha}{\Gamma(\alpha\rho)\Gamma(1-\alpha\rho)} e^{-\alpha\rho x}
    \int_0^\infty t^{\alpha\rho} (1+t)^{-(\alpha+1)} (te^{-x} + 1)^{-1}\, \dd t \nonumber \\ 
  &=& \frac{\alpha}{\Gamma(\alpha\rho)\Gamma(1-\alpha\rho)}
    \frac{\Gamma(\alpha\rho+1) \Gamma(\alpha\rhohat + 1)}{\Gamma(\alpha+2)}
    e^{-\alpha\rho x} \Ghg{1}{\alpha\rho+1}{\alpha+2}{1-e^{-x}} ,
  \label{jump density}
\end{eqnarr}
where the final line follows from \cite[formula 3.197.5]{GR}, and is to be understood in the
sense of analytic continuation when $x < 0$.


\section{Wiener-Hopf factorisation}\label{WHF}%

We begin with a brief sketch of the Wiener-Hopf factorisation for L\'evy processes,
and refer the reader to
\citep[][Chapter 6]{Kyp} or \citep[][VI.2]{BertoinLP} for further details, including
proofs.

The Wiener-Hopf factorisation describes the characteristic exponent of a L\'evy process in terms of the Laplace exponents of two subordinators.
For our purposes, a \define{subordinator} is defined as an increasing L\'evy process,
possibly killed at an independent exponentially distributed time and sent to
the cemetary state $+\infty$. If $H$ is a
subordinator with expectation operator $\LevE$, we define its
\define{Laplace exponent} $\phi$ by the equation
\[ \LevE \bigl[ \exp(-\lambda H_1)\bigr] = \exp(-\phi(\lambda)), \for \lambda \ge 0 . \]
Standard theory allows us to analytically extend $\phi(\lambda)$ to $\{\lambda\in\mathbb{C}: \Re \lambda \geq 0\}$.
Similarly, let $\xi$ be a L\'evy process, again with expectation $\LevE$,
and denote its characteristic exponent
by $\CE$, so that
\[ \LevE\bigl[ \exp(\iu\theta \xi_1) \bigr] = \exp(-\CE(\theta)) ,
  \for \theta \in \RR. \]
The \define{Wiener-Hopf factorisation} of $\xi$ consists of the decomposition
\begin{equation}\label{the WHF}%
  k \CE(\theta) = \kappa(-\iu\theta) \hat \kappa( \iu\theta) , \for \theta \in \RR,
\end{equation}
where $k > 0$ is a constant which may, without loss of generality,
be taken equal to unity, and the functions $\kappa$ and $\hat \kappa$
are the Laplace exponents of certain subordinators which we denote $H$ and $\hat H$.

Any decomposition of the form \eqref{the WHF} is unique,
up to the constant $k$, provided that the
functions $\kappa$ and $\hat \kappa$ are Laplace exponents of subordinators.
The exponents $\kappa$ and $\hat \kappa$
are termed the \define{Wiener-Hopf factors} of $\xi$.

The subordinator $H$ can be identified in law as an appropriate time change of the running maximum process  $\bar \xi: = (\bar\xi_t)_{t\geq 0}$,
where $\bar \xi_t = \sup\{ \xi_s, \, s \le t\}$.
In particular, the range of $H$ and $\bar \xi$ are the same.
Similarly, $\hat H$ is equal in law to an appropriate time-change of $-\underline{\xi}: = (-\underline{\xi})_{t\geq 0}$, with
$\underline \xi_t = \inf\{\xi_s, \, s \le t \}$, and they have the same range.
Intuitively speaking, $H$ and $\hat H$ keep track of how $\xi$ reaches its new maxima
and minima, and they are therefore termed the 
\define{ascending} and \define{descending ladder height processes} associated
to $\xi$.


\medskip\noindent
In Sections \ref{s:small alpha WH} and \ref{s:big alpha WH}
we shall deduce in explicit form the Wiener-Hopf factors of $\xi$ from its characteristic exponent.
Analytically, we will need to distinguish the cases $\alpha \in (0,1]$ and 
$\alpha \in (1,2)$; in probabilistic terms, these correspond to the regimes where $X$
cannot and can hit zero, respectively.

Accordingly, the outline of this section is as follows. We first introduce two classes of
L\'evy processes and two transformations of subordinators which will be used to identify
the process $\xi$ and the ladder processes $H,\hat H$. We then present two subsections
with the same structure: first a theorem
identifying the factorisation and the ladder processes, and then a proposition
collecting some further details of important characteristics of the ladder height processes, which will be used in the applications.

\subsection{Hypergeometric L\'evy processes}\label{HGLPs}%
A process is said to be a \define{hypergeometric
L\'evy process} with parameters $(\beta,\gamma,\hat\beta,\hat\gamma)$ if it has
characteristic exponent
\[ \frac{\Gamma(1-\beta+\gamma-\iu\theta)}{\Gamma(1-\beta-\iu\theta)}
  \frac{\Gamma(\hat\beta+\hat\gamma+\iu\theta)}{\Gamma(\hat\beta + \iu\theta)}
  , \for \theta \in \RR \]
and the parameters lie in the admissible set
\[ \bigl\{ \beta \le 1, \, \gamma \in (0,1), \,
  \hat\beta \ge 0, \, \hat\gamma \in (0,1) \bigr\} . \]
In \citet{JCKuz} the authors derive the L\'evy measure and Wiener-Hopf factorisation
of such a process, and show that the processes $\xi^*$, $\xi^\uparrow$ and
$\xi^\downarrow$ of \citet{CC06} belong to this class;
these are, respectively, the L\'evy processes
appearing in the Lamperti transform of the $\alpha$-stable
process absorbed at zero, conditioned to stay positive and
conditioned to hit zero continuously.

\subsection{Lamperti-stable subordinators}\label{LSSs}%
A \define{Lamperti-stable subordinator} is characterised by parameters in the admissible set
\[ \{ (q, \mathtt{a}, \beta, c, \LSdrift) : \mathtt{a} \in (0,1),\ \beta\leq 1+\mathtt{a}, \, q, c, \LSdrift \ge 0 \} , \]
and it is defined as the (possibly killed) increasing L\'evy process with killing rate $q$, drift
$\LSdrift$, and L\'evy density
\[ c \frac{e^{\beta x}}{(e^x-1)^{\mathtt{a}+1}}, \for x > 0 . \]
It is simple to see from \cite[Theorem 3.1]{C-P-P}
that the Laplace exponent of such a process is given by
\begin{equation}\label{LSS LE}%
  \Phi(\lambda) = q+ \LSdrift \lambda - c \Gamma(-\mathtt{a})
    \left( \frac{\Gamma(\lambda + 1 - \beta + \mathtt{a})}{\Gamma(\lambda + 1 - \beta)}
      - \frac{\Gamma(1-\beta+\mathtt{a})}{\Gamma(1-\beta)} \right), \for \lambda \ge 0.
\end{equation}


\subsection{Esscher and \texorpdfstring{$\Ttrans_{\beta}$}{T-beta} transformations and special Bernstein functions}

The Lamperti-stable subordinators just introduced will not be sufficient to identify
the ladder processes associated to $\xi$ in the case $\alpha \in (1,2)$.
We therefore introduce two transformations of subordinators in order to expand
our repertoire of processes.

The first of these is the classical Esscher transformation,
a generalisation of the Cameron-Girsanov-Martin transformation
of Brownian motion. The second, the $\Ttrans_\beta$ transformation, is more recent,
but we will see that, in the cases we are concerned with, it is closely connected
to the Esscher transform. We refer the reader to \cite[\S 3.3]{Kyp} and
\cite[\S 2]{KP-CT} respectively for details.


The following result is classical.

\bigskip
\begin{lem}
Let $H$ be a subordinator with Laplace exponent $\phi$, and let $\beta > 0$.
Define the function
\[ \EsscherT_\beta \phi(\lambda) = \phi(\lambda + \beta) - \phi(\beta)
  , \for \lambda \ge 0 . \]
Then, $\EsscherT_\beta \phi$ is the Laplace exponent of a subordinator,
known as the \define{Esscher transform} of $H$ (or of $\phi$).

The Esscher transform of $H$ has no killing and the same drift coefficient as $H$,
and if the L\'evy measure of $H$ is $\Pi$, then its Esscher transform has
L\'evy measure $e^{-\beta x} \Pi(\dd x)$.
\end{lem}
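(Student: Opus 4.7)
The plan is to verify the claim directly from the L\'evy-Khintchine representation for subordinators. Recall that if $H$ has killing rate $q$, drift $\LSdrift$ and L\'evy measure $\LM$ concentrated on $(0,\infty)$, then
\[
  \phi(\lambda) = q + \LSdrift \lambda + \int_{(0,\infty)} (1 - e^{-\lambda x})\, \LM(\dd x),
  \for \lambda \ge 0.
\]
Substituting this expression into $\phi(\lambda+\beta) - \phi(\beta)$, the killing constant $q$ cancels and the drift contribution telescopes to $\LSdrift \lambda$. For the integral term, I would use
\[
  (1 - e^{-(\lambda+\beta)x}) - (1 - e^{-\beta x}) = e^{-\beta x}(1 - e^{-\lambda x}),
\]
which rewrites $\EsscherT_\beta\phi$ in the canonical form
\[
  \EsscherT_\beta \phi(\lambda) = \LSdrift \lambda + \int_{(0,\infty)} (1 - e^{-\lambda x})\, e^{-\beta x} \LM(\dd x).
\]
From this expression the claimed data for the transformed process can simply be read off: killing rate $0$, drift $\LSdrift$, and L\'evy measure $e^{-\beta x}\LM(\dd x)$.

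Before concluding, I would check that the putative transformed measure is actually a valid L\'evy measure of a subordinator. This reduces to showing $\int_{(0,\infty)} (1\wedge x) e^{-\beta x}\LM(\dd x) < \infty$, which is immediate: since $\beta > 0$, the factor $e^{-\beta x}$ is bounded by $1$, so the integral is dominated by $\int_{(0,\infty)} (1\wedge x)\, \LM(\dd x)$, which is finite by the corresponding integrability of $\LM$. Consequently $\EsscherT_\beta\phi$ is genuinely the Laplace exponent of some subordinator, and by uniqueness of the L\'evy-Khintchine triple the characteristics are as asserted.

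There is no serious obstacle here; the only subtlety is the justification that the transformed measure integrates $1\wedge x$, but this is an immediate bound. (If one wished to give a more probabilistic derivation, one could instead observe that $M_t = \exp(-\beta H_t + \phi(\beta) t)$ is a non-negative unit-mean $\mathbb{P}$-martingale on the event that $H$ is unkilled, and define a change of measure by $M_t$; under the new law the process $H$ is still a subordinator, and its Laplace transform at $\lambda$ is computed via $\LevE[e^{-\lambda H_t} M_t] = \exp(-t(\phi(\lambda+\beta) - \phi(\beta)))$, recovering the stated exponent. I would mention this only briefly, as the direct computation is shorter.)
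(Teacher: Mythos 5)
Your proof is correct and complete. Note that the paper does not actually prove this lemma: it simply states ``the following result is classical'' and points the reader to \citet[\S 3.3]{Kyp}, so there is no authorial argument to compare against. Your direct computation from the L\'evy--Khintchine form for subordinators,
\[
  \phi(\lambda) = q + \LSdrift\lambda + \int_{(0,\infty)}(1 - e^{-\lambda x})\,\LM(\dd x),
\]
is exactly the standard route: the killing cancels, the drift telescopes, the elementary identity $(1-e^{-(\lambda+\beta)x}) - (1-e^{-\beta x}) = e^{-\beta x}(1-e^{-\lambda x})$ rewrites the jump part, and the domination $e^{-\beta x}\le 1$ for $x>0$ confirms that $e^{-\beta x}\LM(\dd x)$ integrates $1\wedge x$ and is therefore a genuine subordinator L\'evy measure. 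Uniqueness of the L\'evy--Khintchine triple then pins down the characteristics as stated. Your parenthetical remark on the exponential martingale $M_t=\exp(-\beta H_t+\phi(\beta)t)$ is the other standard (probabilistic) derivation and is equally valid; for a statement at this level the analytic verification you lead with is the cleaner choice.
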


\label{define sBf}
\noindent
Before giving the next theorem, we need to introduce the notions of special Bernstein
function and conjugate subordinators, first defined by \citet{SV-special}.
Consider a function
$\phi\colon [0,\infty) \to \RR$, and define
$\phi^* \colon [0,\infty) \to \RR$ by
\[ \phi^*(\lambda) = \lambda/\phi(\lambda) . \]
The function $\phi$ is called a \define{special Bernstein
function} if both $\phi$ and $\phi^*$ are the Laplace exponents of
subordinators. In this case, $\phi$ and $\phi^*$ are said
to be \define{conjugate} to one another, as are their corresponding subordinators.

\bigskip
\begin{prop}\label{t:Tbeta}%
Let $H$ be a subordinator with Laplace exponent $\phi$, and let $\beta > 0$. Define
\begin{equation}\label{eq:Tbeta}%
  \Ttrans_\beta \phi(\lambda)
  = \frac{\lambda}{\lambda+\beta} \phi(\lambda+\beta) , \for \lambda \ge 0.
\end{equation}
Then $\Ttrans_\beta \phi$ is the Laplace exponent of a subordinator with
no killing and the same drift coefficient as $H$.

Furthermore, if $\phi$ is a special Bernstein function
conjugate to $\phi^*$, then $\Ttrans_\beta \phi$ is a special
Bernstein function conjugate to
\[ \EsscherT_\beta \phi^* + \phi^*(\beta) . \]
\end{prop}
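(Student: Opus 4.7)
My plan is to split the proof into two halves matching the two assertions. The first half—that $\Ttrans_\beta\phi$ is a subordinator Laplace exponent with no killing and drift $\LSdrift$—is a direct manipulation of the Lévy-Khintchine representation. The second half, concerning conjugacy, is then a one-line algebraic consequence of the first.

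For the first half, I would start from the canonical form $\phi(\mu)=q+\LSdrift\,\mu+\int_0^\infty(1-e^{-\mu x})\Pi(\dd x)$. A single Fubini yields the complementary representation
\[
\frac{\phi(\mu)}{\mu}=\LSdrift+\frac{q}{\mu}+\int_0^\infty e^{-\mu s}\bar\Pi(s)\,\dd s,\qquad \bar\Pi(s):=\Pi((s,\infty)),
\]
so that, setting $\mu=\lambda+\beta$ and multiplying by $\lambda$, one obtains
\[
\Ttrans_\beta\phi(\lambda)=\LSdrift\,\lambda+\lambda\int_0^\infty e^{-\lambda s}F(s)\,\dd s,\qquad F(s):=e^{-\beta s}\bigl(q+\bar\Pi(s)\bigr).
\]
Because $F$ is positive, decreasing, and vanishes at $+\infty$, a second Fubini rewrites the remaining term as $\int_0^\infty(1-e^{-\lambda u})\nu(\dd u)$ with $\nu:=-\dd F$. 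This is the Lévy-Khintchine form of a subordinator with the claimed killing (zero, from $F(\infty)=0$), drift $\LSdrift$, and Lévy measure $\nu$. The requisite integrability $\int_0^1 u\,\nu(\dd u)<\infty$ reduces, via yet another Fubini, to $\int_0^1 \bar\Pi(s)\,\dd s<\infty$, i.e.\ to the standing subordinator condition $\int(1\wedge x)\,\Pi(\dd x)<\infty$. As a sanity check on the killing and drift, $\Ttrans_\beta\phi(0)=0$ and $\Ttrans_\beta\phi(\lambda)/\lambda\to\LSdrift$ follow directly from the definition \eqref{eq:Tbeta}.

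For the second half, I would note the purely algebraic identity
\[
\frac{\lambda}{\Ttrans_\beta\phi(\lambda)}=\frac{\lambda+\beta}{\phi(\lambda+\beta)}=\phi^*(\lambda+\beta)=\EsscherT_\beta\phi^*(\lambda)+\phi^*(\beta).
\]
Under the hypothesis that $\phi^*$ is itself a subordinator Laplace exponent, the preceding Esscher-transform lemma guarantees that $\EsscherT_\beta\phi^*$ is one as well, and adding the nonnegative constant $\phi^*(\beta)$ merely introduces a killing term, preserving the Bernstein property. Combined with the first half, this exhibits $\Ttrans_\beta\phi$ as a special Bernstein function with conjugate $\EsscherT_\beta\phi^*+\phi^*(\beta)$ as claimed. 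The only step demanding any care is the second Fubini in the first half—specifically, verifying that $\nu$ has the right integrability near the origin—and everything else is direct computation.
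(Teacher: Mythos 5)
Your proof is correct. For the second assertion you take exactly the same route as the paper: rewrite $\Ttrans_\beta\phi(\lambda)=\lambda/\phi^*(\lambda+\beta)$ and expand $\phi^*(\lambda+\beta)=\EsscherT_\beta\phi^*(\lambda)+\phi^*(\beta)$. The only difference is that you make explicit the (easy) point that the latter is itself a subordinator Laplace exponent, which the paper leaves implicit.

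For the first assertion the approaches genuinely diverge. The paper does not prove this claim at all; it cites Gnedin (p.~124), where the statement is obtained as a consequence of a path transformation, and also \cite{KP-CT}, where a direct argument is given for spectrally negative Lévy processes and then transferred to subordinators. You instead give a short, self-contained analytic proof: starting from the canonical Lévy--Khintchine form, you pass to the tail representation $\phi(\mu)/\mu=\drift+q/\mu+\int_0^\infty e^{-\mu s}\bar\Pi(s)\,\dd s$, substitute $\mu=\lambda+\beta$, multiply by $\lambda$, and recognise $\Ttrans_\beta\phi(\lambda)=\drift\lambda+\lambda\int_0^\infty e^{-\lambda s}F(s)\,\dd s$ with $F(s)=e^{-\beta s}\bigl(q+\bar\Pi(s)\bigr)$. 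Since $F$ is nonnegative, right-continuous, decreasing and vanishes at infinity, a second Fubini gives the Lévy--Khintchine form with zero killing, drift $\drift$, and Lévy measure $\nu=-\dd F=\beta e^{-\beta u}(q+\bar\Pi(u))\,\dd u + e^{-\beta u}\Pi(\dd u)$; the needed integrability near zero is dominated by $\int_0^1\bar\Pi(s)\,\dd s<\infty$, which is the standing assumption on $\Pi$. This is more elementary and self-contained than the paper's citation-based treatment, and it has the further advantage of producing the Lévy measure of $\Ttrans_\beta\phi$ explicitly, whereas the path-transformation viewpoint yields a more probabilistic picture (the transformed subordinator can be realised pathwise from $H$). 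Both are valid; yours is the more directly checkable.
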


\begin{proof}
The first assertion is proved in \citet[p.~124]{Gne-rrcs} as the result of a path transformation,
and directly, for spectrally negative L\'evy processes (from which the
case of subordinators is easily extracted) in \citet{KP-CT}.
The killing rate and drift coefficient can be read off as $\Ttrans_\beta\phi(0)$ and
$\lim_{\lambda \to \infty} \Ttrans_\beta\phi(\lambda)/\lambda$.

The second claim can be seen immediately by rewriting \eqref{eq:Tbeta} as
\[ \Ttrans_\beta \phi(\lambda) = \frac{\lambda}{\phi^*(\lambda+\beta)} \]
and observing that
$\phi^*(\lambda+\beta) =  \EsscherT_\beta \phi^*(\lambda) + \phi^*(\beta)$
for $\lambda \ge 0$.
\end{proof}


\subsection{Wiener-Hopf factorisation for \texorpdfstring{$\alpha \in (0,1]$}{alpha in (0,1]}}
\label{s:small alpha WH}

\begin{thm}[Wiener-Hopf factorisation] \label{small alpha WH} \mbox{}
\begin{enumerate}[(i)] 
  \item \label{small alpha WH1}
    When $\alpha \in (0,1]$,
    the Wiener-Hopf factorisation of $\xi$ has components
    \begin{equation*}
      \kappa(\lambda) = \frac{\Gamma(\alpha\rho+\lambda)}{\Gamma(\lambda)},
      \qquad
      \hat\kappa(\lambda) = \frac{\Gamma(1-\alpha\rho+\lambda)}
        {\Gamma(1-\alpha+\lambda)} , \for \lambda \ge 0.
    \end{equation*}
    Hence, $\xi$ is a hypergeometric L\'evy process with parameters
    \[ \bigl(\beta,\gamma,\hat\beta,\hat\gamma\bigr)
      = \bigl(1, \alpha\rho, 1-\alpha, \alpha\rhohat \bigr). \]
  \item \label{small alpha WH2}
    The ascending ladder height process is a Lamperti-stable subordinator with
 parameters 
    \[ \bigl(q,  \mathtt{a}, \beta, c, \LSdrift \bigr)
      = \left(0, \alpha\rho, \, 1, \, -\frac{1}{\Gamma(-\alpha\rho)}, \, 0 \right). \]
  \item \label{small alpha WH3}
    The descending ladder height process is a Lamperti-stable subordinator with
    parameters
    \[ \bigl(q, \mathtt{a}, \beta, c, \LSdrift \bigr)
      = \left( \frac{\Gamma(1-\alpha\rho)}{\Gamma(1-\alpha)}, \alpha\rhohat, \, \alpha, \, -\frac{1}{\Gamma(-\alpha\rhohat)}, \, 0 \right) , \]
    when $\alpha < 1$, and 
    \[ \bigl(q, \mathtt{a}, \beta, c, \LSdrift \bigr)
      = \left( 0, \alpha\rhohat, \, \alpha, \, -\frac{1}{\Gamma(-\alpha\rhohat)}, \, 0 \right) , \]  
    when $\alpha = 1$.
\end{enumerate}

\begin{proof}
First we compute $\CECPP$ and $\CELS$, the characteristic exponents of $\xiCPP$ and
$\xiLS$. 
As $\CECPP$ is a compound Poisson process with jump rate $c_-/\alpha$ and jump distribution given by (\ref{jump cf}), it is immediate that, 
 for $\theta \in \RR$,
\[ \CECPP(\theta) = \frac{\Gamma(\alpha)}{\Gamma(\alpha\rhohat)\Gamma(1-\alpha\rhohat)}
  \left( 1 -
    \frac{\sin(\alpha \rho \pi)}{\pi \Gamma(\alpha)}
    \Gamma(1-\alpha\rho + \iu\theta) \Gamma(\alpha\rho - \iu\theta)
    \Gamma(1 + \iu\theta) \Gamma(\alpha - \iu\theta)
  \right) . \]
On the other hand, \cite[Theorem 1]{JCKuz} provides an expression for the characteristic exponent
$\Psi^*$ of the Lamperti-stable process $\xi^*$ from Section \ref{LS procs},
and removing the killing from this gives us
\[ \CELS(\theta)
  = \frac{\Gamma(\alpha-\iu\theta)}{\Gamma(\alpha\rhohat - \iu\theta)}
    \frac{\Gamma(1+\iu\theta)}{\Gamma(1-\alpha\rhohat + \iu\theta)} - 
    \frac{\Gamma(\alpha)}{\Gamma(\alpha\rhohat)\Gamma(1-\alpha\rhohat)} . \]
We can now compute
\begin{eqnarr*}
\CE(\theta) &=& \CELS(\theta) + \CECPP(\theta) \\
  &=& \Gamma(\alpha-\iu\theta)\Gamma(1+\iu\theta)
    \left(\frac{1}{\Gamma(\alpha\rhohat-\iu\theta)\Gamma(1-\alpha\rhohat+\iu\theta)} -
    \frac{\Gamma(1-\alpha\rho+\iu\theta)\Gamma(\alpha\rho-\iu\theta)}
      {\Gamma(\alpha\rho)\Gamma(1-\alpha\rho)\Gamma(\alpha\rhohat)\Gamma(1-\alpha\rhohat)}
    \right) \\
  &=& \Gamma(\alpha-\iu\theta)\Gamma(1+\iu\theta)
    \Gamma(1-\alpha\rho+\iu\theta)\Gamma(\alpha\rho-\iu\theta) \\
  && {} \times
    \left( \frac{\sin(\pi(\alpha\rhohat-\iu\theta)) \sin(\pi(\alpha\rho-\iu\theta))}{\pi^2}
    - \frac{\sin(\pi\alpha\rhohat) \sin(\pi\alpha\rho)}{\pi^2}
    \right).
\end{eqnarr*}
It may be proved, using product and sum identities for trigonometric functions, that
\[ \sin(\pi(\alpha\rhohat - \iu\theta)) \sin(\pi(\alpha\rho-\iu\theta))
  + \sin(\pi\iu\theta)\sin(\pi(\alpha - \iu\theta))
  = \sin(\pi\alpha\rhohat)\sin(\pi\alpha\rho).
\]
This leads to
\begin{eqnarr}
  \CE(\theta)
  &=& \Gamma(\alpha-\iu\theta)\Gamma(1+\iu\theta)
    \Gamma(1-\alpha\rho+\iu\theta)\Gamma(\alpha\rho-\iu\theta)
    \frac{\sin(-\pi\iu\theta)\sin(\pi(\alpha - \iu\theta))}{\pi^2} \nonumber \\
  &=& \frac{\Gamma(\alpha-\iu\theta)\Gamma(1+\iu\theta)}
      {\Gamma(1+\iu\theta)\Gamma(-\iu\theta)}
    \frac{\Gamma(\alpha\rho-\iu\theta)\Gamma(1-\alpha\rho+\iu\theta)}
      {\Gamma(\alpha-\iu\theta)\Gamma(1-\alpha+\iu\theta)} \nonumber \\
  &=& \frac{\Gamma(\alpha\rho - \iu\theta)}{\Gamma(-\iu\theta)}
    \times
    \frac{\Gamma(1 - \alpha\rho + \iu\theta)}{\Gamma(1 - \alpha + \iu\theta)} .
    \label{small alpha CE}
\end{eqnarr}
Part \pref{small alpha WH1} now follows by the uniqueness of the Wiener-Hopf
factorisation, once we have identified $\kappa$ and $\hat \kappa$ as Laplace exponents
of subordinators. Substituting the parameters in parts \pref{small alpha WH2}
and \pref{small alpha WH3} into the formula \eqref{LSS LE} for the Laplace
exponent of a Lamperti-stable subordinator, and adding killing in the case
of part \pref{small alpha WH3}, completes the proof.
\end{proof}
\end{thm}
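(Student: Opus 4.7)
The strategy is to exploit the decomposition $\xi = \xiLS + \xiCPP$ from Proposition \ref{xi repr}, so that the characteristic exponent of $\xi$ factors as $\CE = \CELS + \CECPP$. Once $\CE$ is in closed form, uniqueness of the Wiener-Hopf factorisation means that if we can exhibit any identity $\CE(\theta) = \kappa(-\iu\theta)\hat\kappa(\iu\theta)$ with $\kappa,\hat\kappa$ Laplace exponents of subordinators, we are done with part (\ref{small alpha WH1}). Parts (\ref{small alpha WH2}) and (\ref{small alpha WH3}) then reduce to substituting the claimed parameters into the Lamperti-stable formula \eqref{LSS LE} and checking the resulting exponents coincide with $\kappa$ and $\hat\kappa$.

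First I would compute $\CECPP$. Since $\xiCPP$ is compound Poisson with rate $c_-/\alpha = \Gamma(\alpha)/[\Gamma(\alpha\rhohat)\Gamma(1-\alpha\rhohat)]$ and jump characteristic function given in \eqref{jump cf}, we immediately have $\CECPP(\theta) = (c_-/\alpha)\bigl(1 - \LevE_0[e^{\iu\theta\jump\xiCPP}]\bigr)$. For $\CELS$, I note that $\xiLS$ is the Lamperti-stable process $\xi^*$ of Section \ref{LS procs} with its independent killing at rate $c_-/\alpha$ removed, so $\CELS(\theta) = \Psi^*(\theta) - c_-/\alpha$; I would then quote the explicit expression for $\Psi^*$ from Theorem 1 of \cite{JCKuz}.

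The main calculation is to sum $\CELS$ and $\CECPP$. I expect a common factor $\Gamma(\alpha-\iu\theta)\Gamma(1+\iu\theta)\Gamma(1-\alpha\rho+\iu\theta)\Gamma(\alpha\rho-\iu\theta)$ to emerge naturally after applying the reflection identity $\Gamma(z)\Gamma(1-z) = \pi/\sin(\pi z)$, leaving a bracketed expression of the shape
\[
  \frac{\sin(\pi(\alpha\rhohat-\iu\theta))\sin(\pi(\alpha\rho-\iu\theta)) - \sin(\pi\alpha\rhohat)\sin(\pi\alpha\rho)}{\pi^2}.
\]
The main obstacle is showing, by product-to-sum and sum-to-product identities, that this difference equals $-\sin(\pi\iu\theta)\sin(\pi(\alpha-\iu\theta))/\pi^2$. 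Granting this, reflection converts the sines back into gamma ratios, and the expression collapses into the product $[\Gamma(\alpha\rho-\iu\theta)/\Gamma(-\iu\theta)]\cdot[\Gamma(1-\alpha\rho+\iu\theta)/\Gamma(1-\alpha+\iu\theta)]$, which matches the hypergeometric characteristic exponent with parameters $(1,\alpha\rho,1-\alpha,\alpha\rhohat)$, all admissible.

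To conclude parts (\ref{small alpha WH2}) and (\ref{small alpha WH3}), I plug the stated parameters into the Lamperti-stable Laplace exponent \eqref{LSS LE} and verify it reproduces $\kappa$ and $\hat\kappa$. The only subtlety is that when $\alpha<1$ we have $\hat\kappa(0) = \Gamma(1-\alpha\rho)/\Gamma(1-\alpha) > 0$, requiring a nonzero killing rate for the descending ladder height process, whereas for $\alpha=1$ this quantity vanishes by $\Gamma(0)^{-1} = 0$, matching the case-split in the statement. This simultaneously confirms that $\kappa$ and $\hat\kappa$ are genuine Laplace exponents of subordinators, so that the decomposition we produced really is the Wiener-Hopf factorisation.
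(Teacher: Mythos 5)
Your proposal follows essentially the same route as the paper's own proof: decompose $\CE = \CELS + \CECPP$ via Proposition \ref{xi repr}, quote \cite{JCKuz} for $\CELS$, convert the gamma bracket to sines via reflection, invoke the trigonometric identity $\sin(\pi(\alpha\rhohat-\iu\theta))\sin(\pi(\alpha\rho-\iu\theta)) - \sin(\pi\alpha\rhohat)\sin(\pi\alpha\rho) = -\sin(\pi\iu\theta)\sin(\pi(\alpha-\iu\theta))$, reflect back to gammas, and then close parts (ii)--(iii) by substituting into \eqref{LSS LE}. Your observation about the killing rate vanishing when $\alpha=1$ matches the paper's case split, so the argument is complete and faithful to the original.
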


\begin{prop}\label{small alpha p}%
\mbox{}
\begin{enumerate}[(i)]
  \item
    The process $\xi$ has L\'evy density
    \[ \LD(x) = 
      \begin{cases}
        - \dfrac{1}{\Gamma(1-\alpha\rhohat)\Gamma(-\alpha\rho)} e^{-\alpha\rho x}
          \Ghg{1+\alpha\rho}{1}{1-\alpha\rhohat}{e^{-x}},
        & \text{if}\ x > 0,  \\
        - \dfrac{1}{\Gamma(1-\alpha\rho)\Gamma(-\alpha\rhohat)} e^{(1-\alpha\rho) x}
          \Ghg{1+\alpha\rhohat}{1}{1-\alpha\rho}{e^{x}},
        & \text{if}\ x < 0.
      \end{cases}
    \]
  \item
    The ascending ladder height has L\'evy density
    \[ \pi_H(x) =
      - \frac{1}{\Gamma(-\alpha\rho)} e^x (e^x-1)^{-(\alpha\rho+1)}, \for x > 0 . \] 
    The ascending renewal measure $U(\dd x) = \LevE \int_0^\infty \Indic{H_t \in \dd x} \, \dd t$ is
    given by
    \[ U(\dd x)/\dd x = \frac{1}{\Gamma(\alpha\rho)} (1-e^{-x})^{\alpha\rho-1}
      , \for x > 0 . \]
  \item
    The descending ladder height has L\'evy density
    \[ \pi_{\hat H}(x)
      = - \frac{1}{\Gamma(-\alpha\rhohat)}
      e^{\alpha x} (e^x-1)^{-(\alpha\rhohat+1)}, \for x > 0. \]
    The descending renewal measure is given by
    \[ \hat U(\dd x)/\dd x = \frac{1}{\Gamma(\alpha\rhohat)}
      (1-e^{-x})^{\alpha\rhohat-1} e^{-(1-\alpha)x}
      , \for x > 0. \]
\end{enumerate}
\begin{proof}
The L\'evy density of $\xi$ follows from \cite[Proposition 1]{JCKuz}, and
the expressions for $\pi_H$ and $\pi_{\hat H}$ are obtained
by substituting in Section \ref{LSSs}.
The renewal measures can be verified using the Laplace transform identity
\[ \int_0^\infty e^{-\lambda x} U(\dd x) = 1/\kappa(\lambda), \for \lambda \ge 0, \]
and the corresponding identity for the descending ladder height.
\end{proof}
\end{prop}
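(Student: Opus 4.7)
The plan is to harvest the identifications already established in Theorem \ref{small alpha WH} and reduce everything to substitutions and one-line beta-integral computations. Parts (ii) and (iii) for the Lévy densities are essentially bookkeeping, while the renewal measures require a short Laplace-transform verification.

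For part (i), since Theorem \ref{small alpha WH}(i) identifies $\xi$ as a hypergeometric Lévy process with parameters $(1,\alpha\rho,1-\alpha,\alpha\rhohat)$, I would quote the general formula for the Lévy density of a hypergeometric Lévy process from \cite{JCKuz} and substitute these parameter values; the displayed expressions for $\LD(x)$ on $x>0$ and $x<0$ then fall out directly. For the Lévy densities $\pi_H$ and $\pi_{\hat H}$ in (ii) and (iii), Theorem \ref{small alpha WH}(ii)--(iii) has already presented both ladder processes as Lamperti-stable subordinators with explicit parameters $(q,\mathtt{a},\beta,c,\LSdrift)$, so feeding these into the template $ce^{\beta x}/(e^x-1)^{\mathtt{a}+1}$ from Section \ref{LSSs} immediately yields the claimed densities.

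For the renewal measures, the strategy is to verify the candidate densities against the Laplace-transform identity $\int_0^\infty e^{-\lambda x}\, U(\dd x) = 1/\kappa(\lambda)$ and its descending counterpart. For the ascending case, the substitution $u=e^{-x}$ converts the Laplace transform of $\frac{1}{\Gamma(\alpha\rho)}(1-e^{-x})^{\alpha\rho-1}$ into the beta integral $\frac{1}{\Gamma(\alpha\rho)}\int_0^1 u^{\lambda-1}(1-u)^{\alpha\rho-1}\,\dd u = \Gamma(\lambda)/\Gamma(\alpha\rho+\lambda)$, which matches $1/\kappa(\lambda)$. The descending case is analogous: absorb the factor $e^{-(1-\alpha)x}$ into the exponential before substituting, and the resulting beta integral delivers $\Gamma(\lambda+1-\alpha)/\Gamma(\lambda+1-\alpha\rho) = 1/\hat\kappa(\lambda)$, using $1-\alpha+\alpha\rhohat = 1-\alpha\rho$.

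No genuine obstacle arises, given the heavy lifting already accomplished in Theorem \ref{small alpha WH}. The only point requiring a moment's care is the case $\alpha<1$, where $\hat H$ carries killing at rate $\Gamma(1-\alpha\rho)/\Gamma(1-\alpha)$; the Laplace-transform identity for the renewal measure is unaffected, because this killing is already built into $\hat\kappa(\lambda)$ and the renewal density decays at precisely the exponential rate $1-\alpha$ needed to make the transform finite.
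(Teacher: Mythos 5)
Your proposal is correct and follows essentially the same route as the paper: quoting the hypergeometric Lévy density formula from \cite{JCKuz} for part (i), reading off $\pi_H$ and $\pi_{\hat H}$ by substituting the Lamperti-stable parameters from Theorem \ref{small alpha WH} into the template of Section \ref{LSSs}, and verifying the renewal densities by computing the beta-integral Laplace transforms against $1/\kappa$ and $1/\hat\kappa$. The paper's proof is terser but relies on exactly the same ingredients, and your explicit check of the convergence for $\alpha<1$ via the $e^{-(1-\alpha)x}$ factor is a correct (if implicit in the original) remark.
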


\subsection{Wiener-Hopf factorisation for \texorpdfstring{$\alpha \in (1,2)$}{alpha in (1,2)}}
\label{s:big alpha WH}

\begin{thm}[Wiener-Hopf factorisation]
\label{big alpha WH}
\mbox{}
\begin{enumerate}[(i)]
  \item \label{big alpha WH1}%
    When $\alpha \in (1,2)$,
    the Wiener-Hopf factorisation of $\xi$ has components
    \begin{equation*}
      \kappa(\lambda) = (\alpha - 1 + \lambda)
        \frac{\Gamma(\alpha\rho + \lambda)}{\Gamma(1 + \lambda)},
      \qquad
      \hat\kappa(\lambda) = \lambda
        \frac{\Gamma(1 - \alpha\rho + \lambda)}{\Gamma(2 - \alpha + \lambda)} ,
      \for \lambda \ge 0 .
    \end{equation*}
  \item \label{big alpha WH2}%
    The ascending ladder height process can be identified as the conjugate subordinator
    (see Section \ref{define sBf}) to $\Ttrans_{\alpha - 1}\psi^*$, where
    \[ \psi^*(\lambda) = \frac{\Gamma(2-\alpha+\lambda)}{\Gamma(1-\alpha\rhohat+\lambda)} , 
      \for \lambda \ge 0 \]
    is the Laplace exponent of a Lamperti-stable process. This Lamperti-stable process
    has parameters
    \[ \bigl( q, \mathtt{a}, \, \beta, \, c , \, \LSdrift \bigr)
      = \biggl(\frac{\Gamma(2-\alpha)}{\Gamma(1-\alpha\rhohat)},  1-\alpha\rho, \, \alpha\rhohat, \,
        - \frac{1}{\Gamma(\alpha\rho-1)}, \, 0 \biggr). \]
  \item \label{big alpha WH3}
    The descending ladder process is the conjugate subordinator to a Lamperti-stable process
    with Laplace exponent
    \[ \phi^*(\lambda) = \frac{\Gamma(2-\alpha+\lambda)}{\Gamma(1-\alpha\rho+\lambda)} ,
      \for \lambda \ge 0, \]
    which has parameters
    \[ \bigl( q, \mathtt{a}, \, \beta, \, c, \, \LSdrift \bigr)
      = \biggl(\frac{\Gamma(2-\alpha)}{\Gamma(1-\alpha\rho)} ,  1-\alpha\rhohat, \, \alpha\rho, \,
        - \frac{1}{\Gamma(\alpha\rhohat-1)}, \, 0 \biggr).\]
    
\end{enumerate}

\begin{proof}
Returning to the proof of Theorem \ref{small alpha WH}\pref{small alpha WH1}, observe
that the derivation of \eqref{small alpha CE} does not depend on the value of $\alpha$. However, the factorisation for $\alpha\in(0,1]$ does not apply when $\alpha\in(1,2)$ because, for example, the expression for $\hat\kappa$ is equal to zero at $\alpha-1>0$ which contradicts the requirement that it be the Laplace exponent of a subordinator.

Now, applying the identity $x \Gamma(x) = \Gamma(x+1)$ to each denominator in that
expression, we obtain for $\theta \in \RR$
\[
  \CE(\theta)
  = (\alpha - 1 - \iu\theta)
  \frac{\Gamma(\alpha\rho - \iu\theta)}{\Gamma(1-\iu\theta)}
  \times \iu\theta
  \frac{\Gamma(1-\alpha\rho+\iu\theta)}{\Gamma(2-\alpha+\iu\theta)} .
\]
Once again, the uniqueness of the Wiener-Hopf factorisation is sufficient to
prove part \pref{big alpha WH1} once we know that $\kappa$ and $\hat \kappa$ are
Laplace exponents of subordinators, and so we now prove \pref{big alpha WH3} and
\pref{big alpha WH2}, in that order.

To prove \pref{big alpha WH3}, note that
Example 2 in \citet{KR-scale} shows that $\phi^*$ is a special Bernstein function,
conjugate to $\hat \kappa$. The fact that $\phi^*$ is the Laplace exponent of the given
Lamperti-stable process follows, as before, by substituting the parameters in
\pref{big alpha WH3} into \eqref{LSS LE}.

For \pref{big alpha WH2}, first observe that
\[ \kappa(\lambda)
  = \lambda \frac{\alpha - 1 + \lambda}{\lambda}
    \frac{\Gamma(\alpha\rho+\lambda)}{\Gamma(1+\lambda)}
  = \frac{\lambda} {\Ttrans_{\alpha - 1} \psi^*(\lambda)} . \]
It follows again from \cite[Example 2]{KR-scale} that $\psi^*$ is a special
Bernstein function, and then Proposition \ref{t:Tbeta} implies that
$\Ttrans_{\alpha-1}\psi^*$ is also a special Bernstein function, conjugate
to $\kappa$. The rest of the claim about $\psi^*$ follows as for part \pref{big alpha WH3}.
%
%
\end{proof}
\end{thm}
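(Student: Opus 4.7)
The plan is to re-use as much of the proof of Theorem \ref{small alpha WH} as possible, then handle the genuinely new issue that, for $\alpha \in (1,2)$, neither factor is itself a ratio of gamma functions of the simple form appearing in the $\alpha\le 1$ case. The starting observation is that the computation leading to \eqref{small alpha CE}, which expressed $\CE(\theta)$ as
\[
  \frac{\Gamma(\alpha\rho - \iu\theta)}{\Gamma(-\iu\theta)}
  \cdot
  \frac{\Gamma(1-\alpha\rho + \iu\theta)}{\Gamma(1-\alpha + \iu\theta)},
\]
is purely algebraic: it relies only on the formulae for $\CELS$ and $\CECPP$ and on trigonometric identities, none of which assume $\alpha \le 1$. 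So this expression for $\CE$ is still valid. However, the naive factors read off here fail to be subordinator Laplace exponents when $\alpha > 1$ (for example, the candidate $\hat\kappa$ vanishes at $\lambda = \alpha - 1 > 0$), so we must regroup the factors.

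Next, applying $x\Gamma(x) = \Gamma(x+1)$ to $\Gamma(-\iu\theta) = \Gamma(1-\iu\theta)/(-\iu\theta)$ and to $\Gamma(1-\alpha+\iu\theta) = \Gamma(2-\alpha+\iu\theta)/(1-\alpha+\iu\theta)$, I would rewrite
\[
  \CE(\theta)
  = (\alpha - 1 - \iu\theta)\,\frac{\Gamma(\alpha\rho-\iu\theta)}{\Gamma(1-\iu\theta)}
  \times \iu\theta\,\frac{\Gamma(1-\alpha\rho+\iu\theta)}{\Gamma(2-\alpha+\iu\theta)}.
\]
This identifies the candidates $\kappa$ and $\hat\kappa$ in \pref{big alpha WH1}. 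By uniqueness of the Wiener-Hopf factorisation, it suffices to check that both candidates are Laplace exponents of subordinators, which is precisely what parts \pref{big alpha WH3} and \pref{big alpha WH2} will establish along the way.

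For \pref{big alpha WH3}, I would first substitute the proposed Lamperti-stable parameters into the formula \eqref{LSS LE} to verify that $\phi^*(\lambda) = \Gamma(2-\alpha+\lambda)/\Gamma(1-\alpha\rho+\lambda)$ really is the Laplace exponent of the claimed Lamperti-stable subordinator; this is a routine gamma-function manipulation. Then I would appeal to Example 2 of \cite{KR-scale} to conclude that $\phi^*$ is a special Bernstein function, whose conjugate is exactly $\lambda/\phi^*(\lambda) = \hat\kappa(\lambda)$. In particular $\hat\kappa$ is a subordinator Laplace exponent, and the descending ladder height is the conjugate subordinator to $\phi^*$.

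For \pref{big alpha WH2}, the key algebraic observation is
\[
  \kappa(\lambda)
  = (\alpha-1+\lambda)\,\frac{\Gamma(\alpha\rho+\lambda)}{\Gamma(1+\lambda)}
  = \frac{\lambda}{\;\dfrac{\lambda}{\lambda+\alpha-1}\,\psi^*(\lambda+\alpha-1)\;}
  = \frac{\lambda}{\Ttrans_{\alpha-1}\psi^*(\lambda)},
\]
where $\psi^*(\lambda) = \Gamma(2-\alpha+\lambda)/\Gamma(1-\alpha\rhohat+\lambda)$. Substituting the stated parameters into \eqref{LSS LE} identifies $\psi^*$ as the Laplace exponent of the claimed Lamperti-stable subordinator, and a second application of Example 2 of \cite{KR-scale} shows $\psi^*$ is a special Bernstein function. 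Proposition \ref{t:Tbeta} then guarantees that $\Ttrans_{\alpha-1}\psi^*$ is itself a special Bernstein function, whose conjugate $\lambda/\Ttrans_{\alpha-1}\psi^*(\lambda) = \kappa(\lambda)$ is therefore a subordinator Laplace exponent, namely that of the ascending ladder height. The main obstacle is precisely this identification of $\kappa$ as a Laplace exponent: unlike in the $\alpha \le 1$ case it is not itself of Lamperti-stable form, and one really needs the $\Ttrans_{\alpha-1}$ transform together with the special-Bernstein theory to express it as the conjugate of something recognisable. Once parts \pref{big alpha WH2} and \pref{big alpha WH3} are in place, part \pref{big alpha WH1} follows by uniqueness of the Wiener-Hopf factorisation.
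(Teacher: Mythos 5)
Your proposal is correct and follows essentially the same line as the paper's own proof: rewrite the characteristic exponent using $x\Gamma(x)=\Gamma(x+1)$, identify the new candidate factors, and establish that they are subordinator Laplace exponents via Example 2 of \cite{KR-scale} and Proposition \ref{t:Tbeta}, concluding by uniqueness of the Wiener-Hopf factorisation. The only difference is cosmetic ordering of the verification steps.
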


\begin{rem}\label{H is killed Esscher}
There is another way to view the ascending ladder height, which
is often more convenient for calculation. Applying the second part of
Proposition \ref{t:Tbeta}, we find that
\[ \kappa(\lambda) = \EsscherT_{\alpha-1}\psi(\lambda) + \psi(\alpha-1) , \]
where $\psi$ is conjugate to $\psi^*$. Hence, $H$ can be seen as the
Esscher transform of the subordinator conjugate to $\psi^*$, with additional
killing.
\end{rem}

\medskip
\begin{prop} \mbox{}
\begin{enumerate}[(i)]
  \item \label{big alpha p1}
    The process $\xi$ has L\'evy density
    \[ \LD(x) =
      \begin{cases}
        - \dfrac{1}{\Gamma(1-\alpha\rhohat)\Gamma(-\alpha\rho)} e^{-\alpha\rho x}
          \Ghg{1+\alpha\rho}{1}{1-\alpha\rhohat}{e^{-x}},
        & \text{if}\ x > 0,  \\
        - \dfrac{1}{\Gamma(1-\alpha\rho)\Gamma(-\alpha\rhohat)} e^{(1-\alpha\rho) x}
          \Ghg{1+\alpha\rhohat}{1}{1-\alpha\rho}{e^{x}},
        & \text{if}\ x < 0.
      \end{cases}
    \]
  \item \label{big alpha p2}
    The ascending ladder height has L\'evy density
    \[ \pi_H(x) = \frac{(e^x-1)^{-(\alpha\rho+1)}}{\Gamma(1-\alpha\rho)}
      \bigl( \alpha - 1 + (1-\alpha\rhohat)e^x \bigr) ,
      \for x > 0. \]
    The ascending renewal measure
    $U(\dd x) = \LevE \int_0^\infty \Indic{H_t \in \dd x}\, \dd t$ is
    given by
    \[ U(\dd x)/\dd x
      = e^{-(\alpha-1)x} \left[ \frac{\Gamma(2-\alpha)}{\Gamma(1-\alpha\rhohat)}
      + \frac{1-\alpha\rho}{\Gamma(\alpha\rho)} 
      \int_x^\infty e^{\alpha\rhohat z} (e^z - 1)^{\alpha\rho-2}\, \dd z \right] ,
      \for x > 0. \]
  \item \label{big alpha p3}
    The descending ladder height has L\'evy density
    \[ \pi_{\hat H}(x) = \frac{e^{(\alpha - 1)x}
      (e^x-1)^{-(\alpha\rhohat+1)}}{\Gamma(1-\alpha\rhohat)}
      \bigl( \alpha - 1 + (1-\alpha\rho)e^x \bigr) ,
      \for x > 0. \] 
    The descending renewal measure is given by
    \[ \hat U(\dd x)/\dd x
    = \frac{\Gamma(2-\alpha)}{\Gamma(1-\alpha\rho)}
    + \frac{1-\alpha\rhohat}{\Gamma(\alpha\rhohat)}
    \int_x^\infty e^{\alpha\rho z} (e^z - 1)^{\alpha\rhohat-2}\, \dd z ,
    \for x > 0.\] 
\end{enumerate}
\begin{proof}
As before, we will prove (i), and then (iii) and (ii) in that order.

\mbox{}(i) When $\alpha \in (1,2)$, the process $\xi$ no longer falls in the class
    of hypergeometric L\'evy processes. Therefore, although the characteristic exponent
    $\CE$ is the same as it was in Proposition \ref{small alpha p}, we can no longer rely
    on \cite{JCKuz}, and need to calculate the L\'evy density ourselves.
        
    Multiplying the jump density \eqref{jump density} of $\xiCPP$ by $c_-/\alpha$,
    we can obtain an
    expression for its L\'evy density $\LDCPP$ in terms of a $\Ghgsymb$ function. When
    we apply the relations \cite[formulas \fromto{9.131.1}{2}]{GR}, we obtain
    \[
      \LDCPP(x) = \begin{cases}
        - \dfrac{1}{\Gamma(1-\alpha\rhohat)\Gamma(-\alpha\rho)} e^{-\alpha\rho x}
        \Ghg{1+\alpha\rho}{1}{1-\alpha\rhohat}{e^{-x}} & \\
        \quad {} + \dfrac{\Gamma(\alpha+1)}{\Gamma(1+\alpha\rho)\Gamma(-\alpha\rho)}
        e^{-\alpha x} \Ghg{1+\alpha\rhohat}{\alpha+1}{1+\alpha\rhohat}{e^{-x}}, & x > 0, \\
        -\dfrac{1}{\Gamma(1-\alpha\rho)\Gamma(-\alpha\rhohat)} e^{(1-\alpha\rho)x}
        \Ghg{1+\alpha\rhohat}{1}{1-\alpha\rho}{e^x} & \\
        \quad {} - \dfrac{\Gamma(\alpha+1)}{\Gamma(\alpha\rhohat)\Gamma(1-\alpha\rhohat)}
        e^x \Ghg{1+\alpha\rho}{\alpha+1}{1+\alpha\rho}{e^x}, & x < 0 .
      \end{cases}
    \]
    Recall that $\Ghg{a}{b}{a}{z} = (1-z)^{-b}$. Then, comparing with
    \eqref{LSabs density}, the equation reads
    \[ \LDCPP(x) = \LD(x) - \LDLS(x), \quad x \ne 0 , \]
    where $\LDLS$ is the L\'evy density of $\xiLS$. The claim then follows by the
    independence of $\xiCPP$ and $\xiLS$.
      
  (iii) \hspace{0pt}
    In \cite[Example 2]{KR-scale}, the authors give the tail of the L\'evy measure
    $\Pi_{\hat H}$,
    and show that it is absolutely continuous. The density $\pi_{\hat H}$ is obtained
    by differentiation.
    
    In order to obtain the renewal measure, start with the following standard observation%
    . For $\lambda \ge 0$,
    \begin{equation}
      \int_0^\infty e^{-\lambda x} \hat U(\dd x)
      = \frac{1}{\hat\kappa(\lambda)} \\
      = \frac{\phi^*(\lambda)}{\lambda} \\
      = \int_0^\infty e^{-\lambda x} \overline{\Pi}_{\phi^*}(x)\, \dd x ,
      \label{*}
    \end{equation}
    where $\overline{\Pi}_{\phi^*}(x) = q_{\phi^*} + \Pi_{\phi^*}(x,\infty)$,
    and $q_{\phi^*}$ and $\Pi_{\phi^*}$ are, respectively, the killing rate and L\'evy
    measure of the subordinator corresponding to $\phi^*$. Comparing with section
    \ref{LSSs}, we have
    \[ q_{\phi^*} = \frac{\Gamma(2-\alpha)}{\Gamma(1-\alpha\rho)},
      \qquad
      \Pi_{\phi^*}(\dd x)/\dd x
      = - \frac{1}{\Gamma(\alpha\rhohat-1)} e^{\alpha\rho x} (e^x-1)^{\alpha\rhohat-2} , 
      \for x > 0, \]
    and substituting these back into (\ref{*}) leads immediately to the desired expression for $\hat U$.
    
  (ii) To obtain the L\'evy density, it is perhaps easier to use the representation of $H$
    as corresponding to a killed Esscher transform, noted in Remark \ref{H is killed Esscher}.
    As in part \pref{big alpha p3}, applying \cite[Example 2]{KR-scale}
    gives
    \[ \pi_{\psi}(x) = \frac{e^{(\alpha - 1)x}
      (e^x-1)^{-(\alpha\rho+1)}}{\Gamma(1-\alpha\rho)}
      \bigl( \alpha - 1 + (1-\alpha\rhohat)e^x \bigr) ,
      \for x > 0, \]
    where $\pi_{\psi}$ is the L\'evy density corresponding to
    $\psi(\lambda) = \lambda/\psi^*(\lambda)$. The effect of the Esscher transform on the
    L\'evy measure gives
    \[ \pi_H(x) = e^{-(\alpha - 1)x} \pi_\psi(x), \for x > 0, \]
    and putting everything together we obtain the required expression.
    
    Emulating the proof of \pref{big alpha p3}, we calculate
    \[
      \int_0^\infty e^{-\lambda x} U(\dd x)
      = \frac{1}{\kappa(\lambda)}
      = \frac{\psi^*(\alpha - 1 + \lambda)}{\alpha - 1 + \lambda}
      = \int_0^\infty e^{-\lambda x} e^{-(\alpha-1) x} \overline{\Pi}_{\psi^*}(x)\, \dd x ,       \]
    using similar notation to previously, and the density of $\hat U$ follows.
    \qedhere
\end{proof}
\end{prop}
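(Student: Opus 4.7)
The plan is to prove the three parts in the order (i), (iii), (ii), since (iii) is the most direct and (ii) can be reduced to (iii) via the Esscher representation of $H$ noted in Remark \ref{H is killed Esscher}.

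For part (i), the obstacle to copying the proof in the $\alpha \in (0,1]$ regime is that, although the closed form of $\CE$ derived in the proof of Theorem \ref{small alpha WH} is unchanged, the parameters no longer fit the hypergeometric class of Section \ref{HGLPs}, so one cannot simply invoke \cite{JCKuz}. Instead I would rely on the decomposition $\xi = \xiLS + \xiCPP$ from Proposition \ref{xi repr}, which by independence gives $\LD = \LDLS + \LDCPP$. The density $\LDLS$ is read off from \eqref{LSabs density}, while $\LDCPP$ is $(c_-/\alpha)$ times the density in \eqref{jump density}. I would then apply the Gauss contiguous relations \cite[9.131.1--2]{GR} to rewrite $\LDCPP$ as a sum of two $\Ghgsymb$ terms, one of which, after using $\Ghg{a}{b}{a}{z} = (1-z)^{-b}$ together with the gamma identity $\Gamma(\alpha+1)/[\Gamma(1+\alpha\rho)\Gamma(-\alpha\rho)] = -c_+$, exactly cancels $\LDLS$. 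The remaining term is the stated formula for $\LD$.

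For part (iii), I would start from the conjugacy $\hat\kappa(\lambda) = \lambda/\phi^*(\lambda)$ established in Theorem \ref{big alpha WH}(\ref{big alpha WH3}) and invoke \cite[Example 2]{KR-scale}, which gives the tail of the L\'evy measure $\Pi_{\phi^*}$ in explicit form; differentiation yields $\pi_{\hat H}$. For the renewal measure I would use the Laplace transform identity
\[
  \int_0^\infty e^{-\lambda x} \hat U(\dd x) = \frac{1}{\hat\kappa(\lambda)} = \frac{\phi^*(\lambda)}{\lambda} = \int_0^\infty e^{-\lambda x} \overline{\Pi}_{\phi^*}(x)\,\dd x,
\]
where the last equality follows by integration by parts against $\overline{\Pi}_{\phi^*}(x) = q_{\phi^*} + \Pi_{\phi^*}(x,\infty)$, and Laplace inversion then gives $\hat U(\dd x)/\dd x = \overline{\Pi}_{\phi^*}(x)$, from which the claimed expression follows by substituting the parameters identified in Theorem \ref{big alpha WH}(\ref{big alpha WH3}).

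For part (ii), I would exploit Remark \ref{H is killed Esscher}: $H$ is the Esscher transform (with shift $\alpha-1$) of the subordinator conjugate to $\psi^*$, with an added killing term. Applying \cite[Example 2]{KR-scale} to $\psi^*$ to obtain the L\'evy density $\pi_\psi$, and then multiplying by $e^{-(\alpha-1)x}$ per the Esscher transform, yields $\pi_H$. For the renewal measure of $H$, I would repeat the argument of (iii) with $\kappa$ in place of $\hat\kappa$: from
\[
  \frac{1}{\kappa(\lambda)} = \frac{\Ttrans_{\alpha-1}\psi^*(\lambda)}{\lambda} = \frac{\psi^*(\alpha-1+\lambda)}{\alpha-1+\lambda},
\]
one recognises this as the Laplace transform of $e^{-(\alpha-1)x}\overline{\Pi}_{\psi^*}(x)$, and the claimed density follows. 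The main obstacle in the whole argument is the trigonometric/hypergeometric bookkeeping for (i): verifying that the two-term splitting of \eqref{jump density} via \cite[9.131.1--2]{GR} is the one in which a single term collapses to $\LDLS$. The remaining parts are essentially applications of \cite[Example 2]{KR-scale} and standard Laplace inversion.
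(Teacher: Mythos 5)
Your proposal matches the paper's proof in approach and order: (i) via the $\xi = \xiLS + \xiCPP$ decomposition, the contiguous relations \cite[9.131.1--2]{GR}, and the Gauss collapse $\Ghg{a}{b}{a}{z}=(1-z)^{-b}$ showing $\LDCPP = \LD - \LDLS$; (iii) via \cite[Example~2]{KR-scale} and the Laplace transform identity $1/\hat\kappa(\lambda)=\phi^*(\lambda)/\lambda = \int_0^\infty e^{-\lambda x}\overline{\Pi}_{\phi^*}(x)\,\dd x$; (ii) via the Esscher representation of Remark~\ref{H is killed Esscher} and the analogous Laplace inversion. The small verbal awkwardness in (i) (speaking of a term of $\LDCPP$ ``cancelling'' $\LDLS$ rather than the second term of $\LDCPP$ equalling $-\LDLS$) does not affect the argument, which is correct and essentially identical to the paper's.
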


\section{Proofs of main results}\label{proofs}%

In this section, we use the Wiener-Hopf factorisation of $\xi$ to prove
Theorems \ref{interval hitting} and \ref{potential int} and deduce
Corollary \ref{interval hitting prob}. We then make use of a connection
with the process conditioned to stay positive in order to prove Theorem
\ref{HP 0 before up}.

Our method for proving each theorem will be to prove a corresponding result
for the L\'evy process $\xi$,
and to relate this to the $\alpha$-stable process $X$ by means of the Lamperti transform and censoring.
In this respect, the following observation is elementary but crucial.
Let
\[ \tau_0^b = \inf \{ t > 0 : X_t \in (0,b) \}  \]
be the first time at which $X$ enters the interval $(0,b)$, where $b < 1$, and
\[ S_a^- = \inf \{ s > 0 : \xi_s < a \} \]
the first passage of $\xi$ below the negative level $a$. Notice that, if $e^a = b$, then
\[ S_a^- < \infty \text{, and } \xi_{S_a^-} \le x
  \iff \tau_0^b < \infty \text{, and } X_{\tau_0^b} \le e^x .\]
We will use this relationship several times.

Our first task is to prove Theorem \ref{interval hitting}. We split the proof
into two parts, based on the value of $\alpha$. In principle, the method
which we use for $\alpha \in (0,1]$ extends to the $\alpha \in (1,2)$ regime; however,
it requires the evaluation of an integral including the descending renewal measure.
For $\alpha \in (1,2)$ we have been unable to calculate this in closed form, and
have instead used a method based on the Laplace transform.
Conversely, the second method could be applied in the case
$\alpha\in(0,1]$; however, it is less transparent.

\medskip
\begin{proof1}
We begin by finding a related law for $\xi$.
By \cite[Proposition III.2]{BertoinLP}, for $a < 0$,
\begin{eqnarr*}
  \LevP_0(\xi_{S_a^-} \in \dd w)
  &=& \LevP_0(- \hat H_{S_{-a}^+} \in \dd w) \\
  &=& \int_{[0,-a]} \hat U(\dd z) \pi_{\hat H}(-w-z) \, \dd w .
\end{eqnarr*}
Using the expressions obtained in Section \ref{WHF} and changing variables,
\begin{eqnarr}
  \LevP_0(\xi_{S_a^-} \in \dd w)
  &=& \frac{\alpha\rhohat e^{-\alpha w} \, \dd w}
      {\Gamma(\alpha\rhohat)\Gamma(1-\alpha\rhohat)}
    \int_0^{1-e^a} t^{\alpha\rhohat-1} ( e^{-w} - 1 - e^{-w}t)^{-\alpha\rhohat - 1} \, \dd t
    \nonumber \\
  &=& \frac{\alpha\rhohat \, \dd w}
      {\Gamma(\alpha\rhohat)\Gamma(1-\alpha\rhohat)}
    e^{-\alpha\rho w} (e^{-w}-1)^{-1}
    \int_0^{\frac{1-e^a}{1-e^w}} s^{\alpha\rhohat-1}(1-s)^{-\alpha\rhohat -1} \, \dd s
    \nonumber \\
  &=& \frac{1}{\Gamma(\alpha\rhohat)\Gamma(1-\alpha\rhohat)}
    (1-e^a)^{\alpha\rhohat}
    e^{(1-\alpha\rho)w}
    (1-e^w)^{-1}
    (e^a-e^w)^{-\alpha\rhohat} \, \dd w ,
    \label{xi passage down}
\end{eqnarr}
where the last equality can be reached by \cite[formula 8.391]{GR} and the formula
$\Ghg{a}{b}{a}{z} = (1-z)^{-b}$.

Denoting by $f(a, w)$ the density on the right-hand side of \eqref{xi passage down},
the relationship between $\xi_{S_a^-}$ and $X_{\tau_0^b}$ yields that
\[
  g(b, z)
  := \stP_1(X_{\tau_0^b} \in \dd z)/\dd z
  = z^{-1} f(\log b, \log z), \for b < 1,\,\, z \in (0, b).
\]
Finally, using the scaling property we obtain
\begin{eqnarr*}
  \stP_x(X_{\tau_{-1}^1} \in \dd y)/\dd y
  &=& \frac{1}{x+1} g\biggl( \frac{2}{x+1}, \frac{y+1}{x+1} \biggr) \\
  &=& \frac{1}{y+1} f \Biggl( \log\biggl(\frac{2}{x+1}\biggr) ,
    \log\biggl(\frac{y+1}{x+1}\biggr) \Biggr) \\
  &=& \frac{\sin(\pi\alpha\rhohat)}{\pi}
    (x+1)^{\alpha\rho}
    (x-1)^{\alpha\rhohat}
    (1+y)^{-\alpha\rho}
    (1-y)^{-\alpha\rhohat}
    (x-y)^{-1}
    ,
\end{eqnarr*}
for $y \in (-1,1)$.
\qed
\end{proof1}

\begin{proof2}
We begin with the ``second factorisation identity'' \cite[Exercise 6.7]{Kyp} for
the process $\xi$, adapted to passage below a level:
\[ \int_0^\infty \int \exp(qa-\beta y)\, \Pro(a - \xi_{S_a^-}\in {\rm d}y)\, \dd a
  = \frac{\hat\kappa(q) - \hat\kappa(\beta)}{(q-\beta) \hat\kappa(q)} ,
  \for a < 0, \, q, \beta > 0. \]
A lengthy calculation, which we omit, inverts the two Laplace transforms to give
the overshoot distribution for $\xi$,
\begin{eqnarr*}
  f(a, w)
  &:=& \frac{\LevP_0(a - \xi_{S_a^-} \in \dd w)}{\dd w} \\
  &=& \frac{\sin(\pi\alpha\rhohat)}{\pi}
    e^{-(1-\alpha\rho)w}
    (1-e^{-w})^{-\alpha\rhohat} \\
  && {} \times 
    \left[ e^{(1-\alpha)a}
      (1-e^a)^{\alpha\rhohat}
      e^{-w}
      (e^{-a}-e^{-w})^{-1}
    - (\alpha\rho - 1)
      \int_0^{1-e^{a}} t^{\alpha\rhohat-1} (1-t)^{1-\alpha} \, \dd t
    \right] ,
\end{eqnarr*}
for $a < 0, w > 0$.
Essentially the same argument as in the $\alpha \in (0,1]$ case
gives the required
hitting distribution for $X$,
\begin{IEEEeqnarray}{rCll}
\frac{\stP_x(X_{\tau_{-1}^1} \in \dd y)}{\dd y}
 &=& \IEEEeqnarraymulticol{2}{l}{\frac{1}{y+1} f \Biggl( \log\biggl(\frac{2}{x+1}\biggr) ,
    \log\biggl(\frac{2}{y+1}\biggr) \Biggr)} \nonumber \\
  &=& \IEEEeqnarraymulticol{2}{l}{\frac{\sin(\pi\alpha\rhohat)}{\pi}
    (1+y)^{-\alpha\rho}
    (1-y)^{-\alpha\rhohat}} \nonumber \\
  && {} \times 
    \bigg[ &
      (y+1)
      (x-1)^{\alpha\rhohat}
      (x+1)^{\alpha\rho-1}
      (x-y)^{-1} \nonumber \\
   &&& {} - (\alpha\rho - 1) 2^{\alpha-1}
      \int_0^{\frac{x-1}{x+1}} t^{\alpha\rhohat-1} (1-t)^{1-\alpha} \, \dd t
    \bigg],
  \label{**}
\end{IEEEeqnarray}
for $x > 1$, $y \in (-1,1)$.

By the substitution $t = \frac{s-1}{s+1}$,
\begin{eqnarr*}
  2^{\alpha-1}
  \int_0^{\frac{x-1}{x+1}} t^{\alpha\rhohat-1} (1-t)^{1-\alpha} \, \dd t
  &=& 2 \int_1^x (s-1)^{\alpha\rhohat-1} (s+1)^{\alpha\rho-2} \, \dd s \\
  &=& \int_1^x (s-1)^{\alpha\rhohat-1} (s+1)^{\alpha\rho-1} \, \dd s
    - \int_1^x (s-1)^{\alpha\rhohat} (s+1)^{\alpha\rho-2} \, \dd s .
\end{eqnarr*}
Now evaluating the second term on the right hand side above via integration by parts and substituting back into (\ref{**})
yields the required law.
\qed
\end{proof2}

\begin{rem} It is worth noting that in recent work, \citet{KKP}, the law of the position of first entry of a so-called Meromorphic L\'evy process into an interval was computed as a convergent series of exponential densities by solving a pair of simultaneous non-linear equations; see Rogozin \cite{Rog71} for the original use of this method in the context of first passage of $\alpha$-stable processes when exiting a finite interval. In principle the method of solving two simultaneous non-linear equations (that is, writing the law of first entry in $(-1,1)$ from $x>1$ in terms of the law of first entry in $(-1,1)$ from $x<-1$ and vice-versa) may provide a way of proving Theorem \ref{interval hitting}. However it is unlikely that  this would  present a more convenient approach because of the complexity of the two non-linear equations involved and because of the issue of proving uniqueness of their solution.
\end{rem}

\begin{proofc1}
This will follow by integrating out Theorem \ref{interval hitting}.
First making the substitutions $z = (y+1)/2$ and $w = \frac{1-z}{1-2z/(x+1)}$,
we obtain
\begin{eqnarr*}
  \stP_x(\tau_{-1}^1 < \infty)
  &=& \frac{\sin(\pi\alpha\rhohat)}{\pi}
    (x+1)^{\alpha\rho}
    (x-1)^{\alpha\rhohat}
    \int_{-1}^1 (1+u)^{-\alpha\rho} (1-u)^{-\alpha\rhohat} (x-u)^{-1} \, \dd u \\
  &=& \frac{\sin(\pi\alpha\rhohat)}{\pi}
    (x+1)^{\alpha\rho}
    (x-1)^{\alpha\rhohat}
    2^{1-\alpha}
    \int_0^1 z^{-\alpha\rho}
      (1-z)^{-\alpha\rhohat}
      \biggl(1-\frac{2}{x+1} z\biggr)^{-1} \, \dd z \\
  &=& \frac{\sin(\pi\alpha\rhohat)}{\pi}
    \biggl( \frac{2}{x+1} \biggr)^{1-\alpha}
    \int_0^1 w^{-\alpha\rhohat}
      (1-w)^{-\alpha\rho}
      \biggl(1-\frac{2}{x+1} w\biggr)^{\alpha-1} \, \dd w \\
  &=& \frac{\Gamma(1-\alpha\rho)}{\Gamma(\alpha\rhohat)\Gamma(1-\alpha)}
    \int_0^{\frac{2}{x+1}} s^{-\alpha} (1-s)^{\alpha\rhohat-1} \, \dd s,
\end{eqnarr*}
where the last line follows by \cite[formulas 3.197.3, 8.391]{GR}. 
Finally, substituting $t = 1-s$, it follows that
\[ \stP_x(\tau_{-1}^1 = \infty)
  = \frac{\Gamma(1-\alpha\rho)}{\Gamma(\alpha\rhohat)\Gamma(1-\alpha)}
  \int_0^{\frac{x-1}{x+1}} t^{\alpha\rhohat - 1} (1-t)^{-\alpha} \, \dd t ,
\]
and this was our aim.
\qed
\end{proofc1}

\begin{proofp1}
In \citet[\S 3, Remark 3]{Por-htpr}, the author establishes,
for $s > 0$, the
hitting distribution of $[0,s]$ for a spectrally positive
$\alpha$-stable process started at $x < 0$.
In our situation, we have a spectrally negative $\alpha$-stable
process $X$, and so the dual process $\hat X$ is spectrally positive:
\begin{eqnarr*}
  \stP_x(X_{\tau_{-1}^1} \in \dd y)
  &=& \stPhat_{1-x}(\hat X_{\tau_0^2} \in 1 - \dd y) \\
  &=& f_{1-x}(1-y) \, \dd y + \gamma(1-x) \, \delta_{-1}(\dd y),
\end{eqnarr*}
using the notation from \cite{Por-htpr} in the final line.
Port gives expressions for $f_{1-x}$ and $\gamma$ which differ somewhat
from the density and atom seen in our Proposition \ref{Port limit}; our
expression
\[
  f_{1-x}(1-y)
  = \frac{\sin(\pi(\alpha-1))}{\pi}
    (x-1)^{\alpha-1}
    (1-y)^{1-\alpha}
    (x-y)^{-1}
    \Ind_{(-1,1)}(y),
\]
is obtained from Port's by evaluating an integral, and
one may compute
$\gamma(1-x)$ 
similarly.

We now prove weak convergence. For this purpose,
the identity \eqref{**} is more convenient than the final
expression in Theorem \ref{interval hitting}. Let us
denote the right-hand side of \eqref{**},
treated as the density of a measure on $[-1,1]$,
by the function $g_\rho \colon [-1,1] \to \RR$, so that
\begin{eqnarr*}
  g_\rho(y)
  &=& \frac{\sin(\pi\alpha\rhohat)}{\pi}
    (x-1)^{\alpha\rhohat}
    (x+1)^{\alpha\rho-1}
    (1+y)^{1-\alpha\rho}
    (1-y)^{-\alpha\rhohat} \\
  && {} + (1-\alpha\rho)
    \frac{\sin(\pi\alpha\rhohat)}{\pi}
    2^{\alpha-1}
    (1+y)^{-\alpha\rho}
    (1-y)^{-\alpha\rhohat} 
    \int_0^{\frac{x-1}{x+1}}
    t^{\alpha\rhohat-1} (1-t)^{1-\alpha} \, \dd t,
\end{eqnarr*}
for $y \in (-1,1)$, and we set $g_\rho(-1) = g_\rho(1) = 0$
for definiteness.

As we take the limit $\rho \to 1/\alpha$, $g_\rho(y)$
converges pointwise to $f_{1-x}(1-y)$. Furthermore,
the functions $g_\rho$ are dominated by a
function $h \colon [-1,1] \to \RR$ of the form
\[ h(y) = C (1-y)^{1-\alpha} (x-y)^{-1}
  + D (1+y)^{-1} (1-y)^{1-\alpha} , \for y \in (-1,1) \]
for some $C,D \ge 0$ depending only on $x$ and $\alpha$;
again we set $h(-1) = h(1) = 0$.

Let $z > -1$. The function $h$ is integrable on $[z,1]$,
and therefore dominated convergence yields
\[ \int_{[z,1]} g_\rho(y) \, \dd y
  \to \int_{[z,1]} f_{1-x}(1-y) \, \dd y
  = \stP_x(X_{\tau_{-1}^1} \ge z), \]
while
\[ \int_{[-1,1]} g_\rho(y) \, \dd y
  = 1 = \stP_x(X_{\tau_{-1}^1} \ge -1) , \]
and this is sufficient for weak convergence.
\qed
\end{proofp1}

\begin{proof5}
We begin by determining a killed potential for $\xi$. Let
\[ u(p, w) \, \dd w = \LevE_p\int_0^{S_0^-} \Indic{\xi_s \in \dd w}
  \, \dd s ,
  \for p, \, w > 0, \]
if this density exists.
Using an identity of Silverstein
(see \citet[Theorem VI.20]{BertoinLP}, or \citet[Theorem 6]{Sil-class}),
and the fact that the renewal measures of $\xi$ are absolutely
continuous, we find that the density $u(p,\cdot)$ does exist, and
\[
  u(p,w) = \begin{cases}
    \displaystyle\int_{p-w}^p \hat v(z) v(w+z-p) \, \dd z , & 0 < w < p, \\
    \displaystyle\int_0^p \hat v(z) v(w+z-p) \, \dd z , & w > p,
  \end{cases}
\]
where $v$ and $\hat v$ are the ascending and descending renewal densities
from Proposition \ref{small alpha p}. For $w > p$,
\begin{eqnarr*}
  u(p,w)
  &=& \frac{1}{\Gamma(\alpha\rho)\Gamma(\alpha\rhohat)}
  \int_0^p (1-e^{-z})^{\alpha\rhohat-1} 
  e^{(1-\alpha)z}
  (1-e^{p-w} e^{-z})^{\alpha\rho-1} \, \dd z \\
  &=& \frac{(1-e^{-p})^{\alpha\rhohat} (1-e^{w-p})^{\alpha\rho-1}}
    {\Gamma(\alpha\rho)\Gamma(\alpha\rhohat)}
  \int_0^1 t^{\alpha\rhohat-1}
  \bigl(1-(1-e^{-p})t\bigr)^{-\alpha}
  \biggl(1-\frac{e^{-p}-1}{e^{w-p}-1}t\biggr)^{\alpha\rho-1} \, \dd t \\
  &=& \frac{(1-e^{p-w})^{\alpha-1}}{\Gamma(\alpha\rho)\Gamma(\alpha\rhohat)}
  \biggl( \frac{1-e^{-p}}{1-e^{-w}}\biggr)^{\alpha\rhohat}
  \int_0^1 s^{\alpha\rhohat-1}
  \biggl(1- \frac{1-e^{-p}}{1-e^{-w}}s\biggr)^{-\alpha} \, \dd s,
\end{eqnarr*}
where we have used the substitution $t = 1 - \frac{e^{-z}-e^{-p}}{1-e^{-p}}$,
and then the substitution $t = s(1-q+qs)^{-1}$ with $q = \frac{e^{-p}-1}{e^{w-p}-1}$.
Finally we conclude that
\[
  u(p,w)
  = \frac{(e^{p-w}-1)^{\alpha-1}}{\Gamma(\alpha\rho)\Gamma(\alpha\rhohat)}
  \int_0^{\frac{1-e^{-w}}{1-e^{-p}}} t^{\alpha\rhohat-1} (1-t)^{-\alpha} \, \dd t,
  \for w > p.
\]
The calculation for $0 < w < p$ is very similar, and in summary we have
\[
  u(p,w) = \begin{cases}
    \dfrac{(e^{p-w}-1)^{\alpha-1}}{\Gamma(\alpha\rho)\Gamma(\alpha\rhohat)}
    \displaystyle\int_0^{\frac{1-e^{-w}}{1-e^{-p}}}
    t^{\alpha\rho-1} (1-t)^{-\alpha} \dd t
    , & 0 < w < p, \\  
    \dfrac{(1-e^{p-w})^{\alpha-1}}{\Gamma(\alpha\rho)\Gamma(\alpha\rhohat)}
    \displaystyle\int_0^{\frac{1-e^{-p}}{1-e^{-w}}}
    t^{\alpha\rhohat-1} (1-t)^{-\alpha} \dd t
    , & w > p.
  \end{cases}
\]

We can now start to calculate the killed potential for $X$. Let
\[ \bar u(b,z)\, \dd z
  = \stE_1 \int_0^{\tau_0^b} \Indic{X_t \in \dd z} \, \dd t
  , \for 0 < b < 1, \, z > b . \]
Let us recall now the censoring method and the
Lamperti transform described in Section \ref{s:cens}.
We defined $\dd A_t = \Indic{X_t > 0} \, \dd t$, denoted by
$\gamma$ the right-inverse of $A$, and
defined $Y_t = X_{\gamma(t)}\Indic{t < T_0}$ for $t \ge 0$.
Furthermore, from the Lamperti transform,
$\dd t = \exp(\alpha \xi_{S(t)}) \, \dd S(t)$, where $S$ is the Lamperti time change. As before, we write $T$ for the inverse time-change to $S$.
Finally, the measure $\stP_x$ for the stable process $X$ (and
the pssMp $Y$) corresponds under the Lamperti transform to the measure
$\LevP_{\log x}$; in particular, $\stP_1$ corresponds to $\LevP_0$,
and $\stE_1$ to $\LevE_0$.

With this in mind, we make the following calculation.
\begin{eqnarr*}
  \bar u(b,z) \, \dd z
  &=& \stE_1 \int_0^{\tau_0^b(X)}
    \Indic{X_t \in \dd z} \, \dd A_t
  = \stE_1 \int_0^{\tau_0^b(Y)}
    \Indic{Y_t \in \dd z} \, \dd t \\
  &=& \LevE_0 \int_0^{T(S_a^-)}
    \Indic{\exp(\xi_{S(t)}) \in \dd z}
    \exp(\alpha \xi_{S(t)}) \, \dd S(t) 
  = z^\alpha \LevE_0 \int_0^{S_a^-}
    \Indic{\exp(\xi_s) \in \dd z} \, \dd s , \\
  &=& z^\alpha \LevE_{-a} \int_0^{S_0^-}
    \Indic{\exp(\xi_s + a) \in \dd z} \, \dd s ,
\end{eqnarr*}
where $a = \log b$, and, for clarity, we have written $\tau_0^b(Z)$ for the
hitting time of $(0,b)$ calculated for a process $Z$.
Hence,
\[ \bar u(b,z) = z^{\alpha-1} u(\log{b^{-1}}, \log{b^{-1}z}), \for 0 < b < 1, \, z > b \]
Finally, a scaling argument yields the following. For $x \in (0,1)$ and $y > 1$,
\begin{eqnarr*}
  \stE_x \int_0^{\tau_{-1}^1} \Indic{X_t \in \dd y} \, \dd t / \dd y
  &=& (x+1)^{\alpha-1} \bar u\biggl( \frac{2}{x+1}, \, \frac{y+1}{x+1} \biggr) \\
  &=& (y+1)^{\alpha-1} u\biggl( \log \frac{x+1}{2} , \, \log \frac{y+1}{2} \biggr) \\
  &=& \begin{cases}
    \dfrac{(x-y)^{\alpha-1}}{\Gamma(\alpha\rho)\Gamma(\alpha\rhohat)}
    \dint_0^{\frac{y-1}{y+1} \frac{x+1}{x-1}}
    t^{\alpha\rho-1} (1-t)^{-\alpha} \, \dd t,
    & 1 < y < x, \\
    \dfrac{(y-x)^{\alpha-1}}{\Gamma(\alpha\rho)\Gamma(\alpha\rhohat)}
    \dint_0^{\frac{y+1}{y-1} \frac{x-1}{x+1}}
    t^{\alpha\rhohat-1} (1-t)^{-\alpha} \, \dd t,
    & y > x .
  \end{cases}
\end{eqnarr*}
The integral substitution $t = \frac{s-1}{s+1}$ gives the form in the theorem.
\qed
\end{proof5}

\medskip \noindent
We now turn to the problem of first passage upward before hitting a point.
To tackle this problem, we will use the
\emph{stable process conditioned to stay positive}.
This process has been studied by a number of authors; for a general account
of conditioning to stay positive,
see for example \citet{ChaDo05}.
If $X$ is the standard $\alpha$-stable process defined in the introduction and
\[ \tau_0^- = \inf( t \ge 0 : X_t < 0 ) \]
is the first passage time below zero, then the process conditioned to stay positive,
denoted
$\Xup$, with probability laws $(\stPup_x)_{x > 0}$, is defined as the Doob $h$-transform of
the killed process $\bigl( X_t \Indic{t < \tau_0^-}, \, t \ge 0 \bigr)$
under the invariant function
\[ h(x) = x^{\alpha\rhohat} . \]
That is, if $T$ is any a.s.\ finite stopping time, $Z$ an $\FF_T$ measurable
random variable, and $x > 0$, then
\[ \stEup_x(Z) = \stE_x \biggl[ Z \frac{h(X_T)}{h(x)} , \, T < \tau_0^- \biggr] . \]

In fact we will make use of this construction for  the dual process
$\hat X$,
with invariant function $\hat h(x) = x^{\alpha\rho}$, and accordingly we will denote the conditioned
process by $\Xhatup$ and use  $(\stPhatup_x)_{x > 0}$ for its probability laws.
It is known that the process $\Xhatup$ is a strong Markov process
which drifts to $+\infty$.

\citet{CC06} show that the process $\Xhatup$ is a pssMp, and so we can apply the
Lamperti transform to it. We will denote the L\'evy process associated to
$\Xhatup$ by $\xihatup$ with probability laws $( \LevPhatup_y)_{y >0 }$. The crucial observation here
is that $\Xhatup$ hits the point $1$ if and only if
its Lamperti transform, $\hat\xi^\uparrow$,  hits the point $0$.

We now have all the apparatus in place to begin the proof.

\begin{proof4}
For each $y \in \RR$, let $\tau_y$ be the first hitting time of
the point $y$, and
let $\tau_y^+$ and $\tau_y^-$ be the first hitting times of
the sets $(y,\infty)$ and $(-\infty,y)$, respectively.
When $\alpha \in (1,2)$, these are all a.s.\ finite
stopping times for the $\alpha$-stable process $X$ and
its dual $\hat X$. Then, when $x \in (-\infty,1)$,
\begin{eqnarr} \label{X-xiup}
  \stP_x(\tau_0 < \tau_1^+)
  = \stP_{x-1}(\tau_{-1} < \tau_0^+)
  &=& \stPhat_{1-x}(\tau_1 < \tau_0^-) \nonumber \\
  &=& \hat h(1-x)
    \stEhat_{1-x}\biggl[ \Indic{\tau_1 < \infty}
      \frac{\hat h(\hat X_{\tau_1})}{\hat h(1-x)}
      , \, \tau_1 < \tau_0^- \biggr] \nonumber \\
  &=& (1-x)^{\alpha\rho} \stPhatup_{1-x}(\tau_1 < \infty), 
\end{eqnarr}
where we have used the definition of $\stPhatup_\cdot$ at $\tau_1$.
(Note that, to unify notation, the various stopping times
refer to the canonical process for each measure.)

We now use facts coming from
\citet[Proposition II.18 and Theorem II.19]{BertoinLP}.
Provided that the potential measure
$U = \LevEhatup_0 \int_0^\infty \Indic{\xihatup \in \cdot} \, \dd t$
is absolutely continuous and there is a bounded continuous version
of its density, say $u$,
then the following holds:
\begin{equation}
  \stPhatup_{1-x}(\tau_1 < \infty)
  = \LevPhatup_{\log(1-x)}(\tau_0 < \infty)
  = C u\bigl(-\log(1-x)\bigr), \label{HP-potential}
\end{equation}
where 
$C$ is the capacity of $\{0\}$ for the process $\xihatup$.


Therefore, we have reduced our problem to that of
finding a bounded, continuous version
of the potential density of $\xihatup$
under $\LevPhatup_0$.
Provided the
renewal measures of $\xihatup$ are absolutely continuous, 
it is readily deduced from
Silverstein's identity
\cite[Theorem VI.20]{BertoinLP}
that a potential density $u$
exists and is given by
\[ u(y) = \begin{cases}
            k \int_0^\infty v(y+z) \hat v(z) \, \dd z, & y > 0 , \\
            k \int_{-y}^\infty v(y+z) \hat v(z) \, \dd z, & y < 0,
          \end{cases}
\]
where $v$ and $\hat v$ are the ascending and descending renewal densities
of the process $\xihatup$, and $k$ is the constant in the Wiener-Hopf
factorisation \eqref{the WHF} of $\xihatup$.

The work of \citet{n-tuple} gives the Wiener-Hopf factorisation of $\xihatup$,
shows that the renewal measures are absolutely continuous and
computes their densities, albeit for a different normalisation
of the $\alpha$-stable process $X$. In our normalisation, the
renewal densities are given by
\[
  v(z) = 
  \frac{1}{\Gamma(\alpha\rhohat)}
  (1-e^{-z})^{\alpha\rhohat-1} , \qquad
  \hat v(z) =
  \frac{1}{\Gamma(\alpha\rho)}
  e^{-z} (1-e^{-z})^{\alpha\rho-1},
\]
and $k = 1$.
See, for example, the computations in \cite{JCKuz}, where the
normalisation of the $\alpha$-stable process agrees with ours. 
It then follows, with similar
calculations to those in the proof of Theorem \ref{potential int},
\[ u(y) = \begin{cases}
            \frac{1}{\Gamma(\alpha\rho)\Gamma(\alpha\rhohat)}
            (1-e^{-y})^{\alpha-1}
            e^{\alpha\rho y}
            \int_0^{e^{-y}} t^{\alpha\rho-1} (1-t)^{-\alpha}\, \dd t, & y > 0, \\
            \frac{1}{\Gamma(\alpha\rho)\Gamma(\alpha\rhohat)}
            (1-e^y)^{\alpha-1}
            e^{(1-\alpha\rhohat)y}
            \int_0^{e^y} t^{\alpha\rhohat-1} (1-t)^{-\alpha} \, \dd t, & y < 0 .
          \end{cases}
\]
This $u$ is the bounded continuous density which we seek,
so by substituting into \eqref{HP-potential} and \eqref{X-xiup},
we arrive at the hitting probability
\begin{equation}\label{HP-nearly}
 \stP_x(\tau_0 < \tau_1^+)
  = \begin{cases}
      C^\prime
      x^{\alpha-1}
      \int_0^{1-x} t^{\alpha\rho-1} (1-t)^{-\alpha} \, \dd t , & 0 < x < 1, \\
      C^\prime
      (-x)^{\alpha-1}
      \int_0^{(1-x)^{-1}} t^{\alpha\rhohat-1} (1-t)^{-\alpha}\, \dd t , & x < 0,
    \end{cases}
\end{equation}
where $C^\prime = \frac{C}{\Gamma(\alpha\rho)\Gamma(\alpha\rhohat)}$.
It only remains to determine the unknown constant here, which we will
do by taking the limit  $x \upto 0$ in (\ref{HP-nearly}).
First we manipulate the second expression above, by recognising that $1 = t + (1-t)$ and
integrating by parts. For $x < 0$,
\[
\begin{split}
  \stP_x(\tau_0 < \tau_1^+)
  &= C^\prime (-x)^{\alpha-1}
  \Biggl[ \int_0^{(1-x)^{-1}} t^{\alpha\rhohat} (1-t)^{-\alpha} \, \dd t
  + \int_0^{(1-x)^{-1}} t^{\alpha\rhohat-1} (1-t)^{1-\alpha} \, \dd t \Biggr] \\
  &= C^\prime (-x)^{\alpha-1}
  \Biggl[ \frac{1}{\alpha-1} (1-x)^{\alpha\rho-1} (-x)^{1-\alpha}
  - \frac{1-\alpha\rho}{\alpha-1}
  \int_0^{(1-x)^{-1}} t^{\alpha\rhohat-1} (1-t)^{1-\alpha} \, \dd t \Biggr] \\
  &= C^\prime \frac{1}{\alpha-1} (1-x)^{\alpha\rho-1}
  - C^\prime \frac{1-\alpha\rho}{\alpha-1}
  (-x)^{\alpha-1}
  \int_0^{(1-x)^{-1}} t^{\alpha\rhohat-1} (1-t)^{1-\alpha} \, \dd t.
\end{split}
\]
Now taking $x \upto 0$, we find that $C^\prime = \alpha-1$.

Finally, we obtain the expression required by performing the integral substitution
$s = 1/(1-t)$ in \eqref{HP-nearly}.
\qed
\end{proof4}


\section*{Acknowledgements}
All three authors would like to thank the anonymous referees, whose
comments have led to great improvements in the paper.
AEK and AW gratefully acknowledge support from the Santander Research Fund.

\bibliographystyle{abbrvnat}
\bibliography{master8}

\end{document}